\newtheorem{theorem}{Theorem}[section]
\newtheorem{lemma}[theorem]{Lemma}
\newtheorem{definition}[theorem]{Definition}
\newtheorem{proposition}[theorem]{Proposition}
\newtheorem{example}[theorem]{Example}
\newtheorem{cor}[theorem]{Corollary}
\newtheorem{remark}[theorem]{Remark}
\def\<{\langle}
\def\>{\rangle}
\def\a{\alpha}
\def\b{\beta}
\def\c{\cdot}
\date{}
\begin{document}
\renewcommand{\baselinestretch}{1.2}
\renewcommand{\arraystretch}{1.0}
\title{\bf On $\alpha$-type (equivariant) cohomology of Hom-pre-Lie algebras}
\date{}
\author{{\bf Shuangjian Guo$^{1}$, Ripan Saha$^{2}$\footnote
        { Corresponding author:~~ripanjumaths@gmail.com} }\\
{\small 1. School of Mathematics and Statistics, Guizhou University of Finance and Economics} \\
{\small  Guiyang  550025, P. R. of China} \\
{\small 2. Department of Mathematics, Raiganj University } \\
{\small  Raiganj, 733134, West Bengal, India}}
 \maketitle
\begin{center}
\begin{minipage}{13.cm}

{\bf \begin{center} ABSTRACT \end{center}}
In this paper, we define a new cohomology theory for multiplicative Hom-pre-Lie algebras which controls deformations of Hom-pre-Lie algebra structure. This new cohomology is a natural one by considering the structure map. We develop equivariant cohomology theory for a Hom-pre-Lie algebra equipped with a finite group action by formulating a proper notion of coefficients system for the equivariant cohomology. We also study the associated formal deformation theory for Hom-pre-Lie algebras in the equivariant context.
 \smallskip

{\bf Key words}:  Group action, Hom-pre-Lie algebra, equivariant cohomology, formal deformation, rigidity.
 \smallskip

 {\bf 2020 MSC}: 17A30, 17B56, 17B61, 17D99.
 \end{minipage}
 \end{center}
 \normalsize\vskip0.5cm

\section*{Introduction}
\def\theequation{\arabic{section}. \arabic{equation}}
\setcounter{equation} {0}
The notion of pre-Lie algebra (also known as left-symmetric algebra \cite{V63}) was introduced by Gerstenhaber \cite{G63} while studying the algebraic deformation theory of associative algebras. The pre-Lie algebras have a close relationship with Lie algebras. For example, a pre-Lie algebra gives rise to a Lie algebra via the commutator bracket, which is known as the sub-adjacent Lie algebra. Pre-Lie algebras have appeared in many areas of mathematics, for example, vertex algebras, quantum field theories, complex and symplectic structures on Lie groups and Lie algebras, etc., see \cite{MK03, CL01, CK00, LM88}, and references therein. A (left) pre-Lie algebra $\mathfrak{g}$ is a $\mathbb{K}$-vector space together with a binary operation $\c$ satisfying the following identity :
$$(a \c b) \c c - a \c (b \c c) = (b \c a) \c c - b \c (a \c c),~\text{for all}~a,b,c \in \mathfrak{g}.$$

The notion of Hom-pre-Lie algebras is a twisted analog of pre-Lie algebras, where the pre-Lie algebra identity is twisted by a self linear map, called the structure map. The study of Hom-version of this type of algebras is an interesting and active area of research these days and found some applications in mathematics as well as in physics. For the various type of Hom-algebras, see \cite{M10, M08, MS10, MZ18}.

Gertenhaber \cite{G63, G64} initiated the study of algebraic deformation theory for associative algebra. Later, following his ideas, deformation theory for various types of algebras has been studied, see \cite{B97, NR66, NR68, LST20, S20, saha19}. Hurle and Makhlouf studied cohomology and deformation theory for Hom-associative and Hom-Lie algebras in \cite{HM19, HM19glas}. In \cite{MS20}, Mukherjee and Saha studied cohomology and deformation theory for Hom-Leibniz algebras.

In this paper, we develop a new cohomology theory for Hom-pre-Lie algebras, we call this new cohomology $\alpha$-type cohomology of Hom-pre-Lie algebras. In \cite{LST20}, the authors defined cohomology and a one-parameter formal deformation for Hom-pre-Lie algebras without considering the structure map $\alpha$. In \cite{LST20}, they have only deformed the multiplication map, and ignored to deform the structure map. Thus, the study of cohomology and deformation in \cite{LST20} is incomplete. This is one of the motivation of writing this paper. In this paper, we deform both the multiplication operation and the structure map. We also develop the corresponding deformation cohomology of Hom-pre-Lie algebras. Recently, we have noticed that a similar cohomology and deformation theory were independently studied in \cite{LMS22}. 
In the next part, we develop  equivariant cohomology for Hom-pre-Lie algebras equipped with a finite group action and also introduce a one-parameter formal deformation theory for Hom-pre-Lie algebras in the equivariant contexts. To develop the equivariant cohomology, we closely follow Bredon's approach \cite{bredon67} for equivariant cohomology of topological $G$-spaces. Similar to the non-equivariant case we show that this equivariant cohomology is deformation cohomology of the equivariant deformation of Hom-pre-Lie algebras.

This paper is organized as follows: In Section \ref{sec 1}, we have discussed the definition, examples, representation, and a known cohomology of Hom-pre-Lie algebras which we shall use throughout the paper. In Section \ref{sec 2}, we develop a new cohomology for (multiplicative) Hom-pre-Lie algebras, we call this $\alpha$-type cohomology of Hom-pre-Lie algebras. In Section \ref{sec 3}, we introduce a one-parameter formal deformation theory of Hom-pre-Lie algebras considering the structure map $\alpha$. We show that infinitesimal of a deformation is a $2$-cocycle of the $\alpha$-type cohomology. We also discuss obstructions and rigidity of Hom-pre-Lie algebra deformations and show that how they are related to our new cohomology. In Section \ref{sec 4}, we develop the equivariant cohomology of Hom-pre-Lie algebras equipped with finite group action. In Section \ref{sec 5}, we introduce a one-parameter formal deformation theory for Hom-pre-Lie algebras in the equivariant contexts. We show some results for equivariant deformation of Hom-pre-Lie algebras analoguous to the non-equivariant case.

\section{Preliminaries} \label{sec 1}
\def\theequation{\arabic{section}.\arabic{equation}}
\setcounter{equation} {0}
In this section, we recall the definition, representation, and cohomology of Hom-pre-Lie algebras following \cite{LST20, SL17}. Throughout the paper, $\mathbb{K}$ denotes a field of characteristics zero.
\begin{definition}
A Hom-Lie algebra is a triple $(g, [~, ~], \alpha)$ consisting of a linear space $g$, a skew-
symmetric bilinear map $[~, ~] : \wedge^2g\rightarrow g$ and an algebra morphism $\alpha : g \rightarrow g$, satisfying:
\begin{eqnarray}
[\alpha(x), [y, z]] + [\alpha(y), [z, x]] + [\alpha(z), [x, y]] = 0,~\text{for all}~x, y, z \in g.
\end{eqnarray}
\end{definition}

\begin{definition}
A Hom-pre-Lie algebra $(A, \cdot, \alpha)$ is a vector space $A$ equipped with a bilinear product
$\cdot : A\times A \rightarrow A,~\text{and a linear map} ~\alpha : A \to A$, satisfying
\begin{enumerate}
\item ($a \cdot b) \cdot \alpha(c) - \alpha(a)\cdot (b \cdot c) = (b \cdot a) \cdot \alpha(c)- \alpha(b) \cdot (a \cdot c)$, for all $a, b, c\in A$.
\item $\alpha(x\cdot y) = \alpha(x) \cdot\alpha(y)$, for all $a,b \in A.$
\end{enumerate}
A Hom-pre-Lie algebra $(A, \c, \alpha)$ is called regular if $\alpha$ is invertible.
\end{definition}


\begin{definition}
Let $(A_1,\c_1, \alpha_1)$ and $(A_2,\c_2, \alpha_2)$ be two Hom-pre-Lie algebras. A morphism from $A_1$ to $A_2$ is a $\mathbb{K}$-linear map $\phi : A_1 \to A_2$ such that 
$$\phi(a\c_1b) =\phi(a)\c_2 \phi(b),~~\phi\circ \alpha_1 = \alpha_2 \circ \phi,~\text{for all}~a,b\in A_1.$$
\end{definition}

\begin{example}
Let $A$ be a two dimensional vector space with basis $\lbrace a_1, a_2 \rbrace$. Define a multiplication on $A$ by
$$a_1\c a_1=0,~a_1\c a_2 =0,~a_2\c a_1 =a_1,~a_2\c a_2 =a_1 +a_2.$$
Let $\alpha$ be a $\mathbb{K}$-linear map on $A$ defined as $\alpha(a_1)=a_1,~\alpha(a_2)= a_1+a_2$. Then $(A,\c, \alpha)$ is a regular Hom-pre-Lie algebra.
\end{example}

\begin{definition}
A representation of a Hom-Lie algebra $(g, [\c, \c], \alpha)$ on a
vector space $V$ with respect to $\beta \in gl(V )$ is a linear map $\rho : g \rightarrow gl(V)$, such that for all $x, y \in g$,
the following equalities are satisfied:
\begin{eqnarray}
&&\rho(\alpha(x))\circ \beta = \beta \circ \rho(x),\\
&&\rho([x, y]) \circ \beta = \rho(\alpha(x))\circ \rho(y) - \rho(\alpha(y))\circ \rho(x).
\end{eqnarray}
\end{definition}

Let $(A, \cdot, \alpha)$ be a Hom-pre-Lie algebra. We always assume that it is regular, i.e. $\alpha$ is invertible.
The commutator $[x, y]_C = x \cdot y - y \cdot x$ gives a Hom-Lie algebra $(A, [\cdot, \cdot]_C, \alpha)$, which is denoted by
$A^C$ and called the sub-adjacent Hom-Lie algebra of $(A, \cdot, \alpha)$.
\begin{definition}
A representation of a Hom-pre-Lie algebra $(A, \cdot, \alpha)$ on a vector space $V$ with
respect to  $\beta \in gl(V)$ consists of a pair $(\rho, \mu)$, where $\rho : A \rightarrow gl(V)$ is a representation of the
sub-adjacent Hom-Lie algebra $A^C$ on $V$ with respect to $\beta \in gl(V)$, and $\mu: A \rightarrow gl(V)$ is a linear
map, for all $a, b \in A$, satisfying:
\begin{eqnarray}
&&\beta \circ \mu(a) = \mu(\alpha(a)) \circ \beta,\\
&&\mu(\alpha(b)) \circ \mu(a)-\mu(a \cdot b) \circ \beta = \mu(\alpha(b))\circ \rho(a)-\rho(\alpha(a)) \circ \mu(b).
\end{eqnarray}
\end{definition}
If $V$ is a presentation of a Hom-pre-Lie algebra $A$ as defined above, then we denote it as $(V, \rho, \mu, \beta).$
\medskip

Let $(A, \c, \alpha)$ be a Hom-pre-Lie algebra. Then $(A, \alpha)$ is a representation of itself via the maps $L, R : A \to \mathfrak{gl}(A)$ defined as follows:
$$L_a(b)=a\c b,~~~R_a(b)=b\c a,~\text{for all}~a, b\in A.$$
It is easy to check that $(A, L, R, \alpha)$ is a representation of $(A,\c, \alpha)$. This representation of $A$ is called the regular representation. In this paper, we will use regular representation only.

Let $(V, \rho, \mu, \beta)$ be a representation of a Hom-pre-Lie algebra $(A, \cdot, \alpha)$. The  cohomology of $A$ with coefficients in $V$ is the
cohomology of the cochain complex $\{ C^*(A, V), \partial \}$, where $C^n (A, V) = \lbrace \phi \in \text{Hom}( A^{\otimes n}, V ) \mid \beta\circ \phi = \phi\circ \alpha^{\otimes n} \rbrace, (n \geq 0)$ and the
coboundary operator $\partial:C^n(A, V ) \rightarrow C^{n+1}(A, V )$ given by
\begin{eqnarray}\label{prelim coho}
&&(\partial f)(a_1,\ldots, a_{n+1})\\ \nonumber
&=&\sum^{n}_{i=1}(-1)^{i+1}\rho(\alpha^{n-1}(x_i))f(a_1,\ldots, \widehat{a_i}, \ldots, a_{n+1}) \\ \nonumber
&&+ \sum^{n}_{i=1}(-1)^{i+1}\mu(\alpha^{n-1}(a_{n+1}))f(a_1,\ldots,\widehat{a_i}, \ldots,  a_{n}, a_i)\\ \nonumber
&&-\sum^{n}_{i=1}(-1)^{i+1} f(\alpha(a_1),\ldots, \widehat{a_i}, \ldots,\alpha( a_n),  a_i\c a_{n+1})\\ \nonumber
&&+\sum_{1\leq i< j\leq n} (-1)^{i+j} f([a_i, a_j]_C, \alpha(a_1),\ldots, \widehat{a_i}, \ldots, \widehat{a_j},\ldots, \alpha(a_{n+1})),
\end{eqnarray}
for $a_1, \ldots, a_{n+1}\in A$. The corresponding cohomology groups are denoted by $H^*(A, V).$ We will see that this cohomology is not a deformation cohomology for Hom-pre-Lie algebras.

\section{\texorpdfstring{$\alpha$}{alpha}-type cohomology for Hom-pre-Lie algebras}\label{sec 2}
\def\theequation{\arabic{section}. \arabic{equation}}
\setcounter{equation} {0}
In this section,  we define an $\alpha$-type cohomology for Hom-pre-Lie algebras. Let $(V, \rho, \mu, \beta)$ be a representation of a Hom-pre-Lie algebra $(A, \nu, \alpha)$.  

Define $\nu: A \to A$ by $\nu(a, b)= a\c b$. Using this notation, we denote a Hom-pre-Lie algebra $(A, \c, \alpha)$ as $(A,\nu, \alpha)$.
Then the complex for the cohomology of $A$ with values in $V$ is given by
\begin{align*}
& \widetilde{C^n}(A,V) = \widetilde{C^n_{\nu}} (A, V) \oplus \widetilde{C_{\alpha}}^{n-1} (A, V) = \text{Hom}(A^{\otimes n}, V ) \oplus \text{Hom}( A^{\otimes n-1}, V ), ~\text{for}~n\geq 1,\\
&  \widetilde{C^n}(A,V) = \lbrace 0 \rbrace, ~\text{for}~n\leq 0.
\end{align*}
Note that for convience, we take $\widetilde{C^{1}_{\alpha}} (A, V) = \text{Hom}( A^0, V ) = \text{Hom}( \mathbb{K}, V )= \lbrace 0 \rbrace$.
We write $(\varphi, \psi )$ with $\varphi \in \widetilde{C^{\bullet}_{\nu}} (A, V)$ and  $\psi \in \widetilde{C^{\bullet}_{\alpha}} (A, V)$ for an element in $\widetilde{C}^\bullet(A,V)$.

We define four maps, with domain and range given in the following diagram:

\begin{equation*}
\xymatrix{
\widetilde{C_\nu}^n \ar"3,3"^-{~~~\widetilde{\partial}_{\alpha \nu}} \ar[rr]^-{\widetilde{\partial}_{\nu\nu}}& & \widetilde{C_\nu}^{n+1}  \\
\bigoplus & &\bigoplus \\
\widetilde{C_{\alpha}^n} \ar"1,3"^-{\widetilde{\partial}_{\nu \alpha}~} \ar[rr]^-{\widetilde{\partial}_{\alpha\alpha}} & & \widetilde{C_{\alpha}}^{n+1},}
\end{equation*}
\begin{eqnarray}
(\widetilde{\partial}_{\nu\nu} \varphi)(a_1,\ldots, a_{n+1})&=&\sum^{n}_{i=1}(-1)^{i+1}\rho(\alpha^{n-1}(a_i))\varphi(a_1,\ldots, \widehat{a_i}, \ldots, a_{n+1}) \\
&&+ \sum^{n}_{i=1}(-1)^{i+1}\mu(\alpha^{n-1}(a_{n+1}))\varphi(a_1,\ldots,\widehat{a_i},\ldots,  a_{n}, a_i)\nonumber\\
&&-\sum^{n}_{i=1}(-1)^{i+1} \varphi(\alpha(a_1),\ldots, \widehat{a_i}, \ldots,\alpha( a_n),  a_i\c a_{n+1})\nonumber\\
&&+\sum_{1\leq i< j\leq n} (-1)^{i+j} \varphi([a_i, a_j]_C, \alpha(a_1),\ldots, \widehat{a_i}, \ldots, \widehat{a_j},..., \alpha(a_{n+1})),\nonumber\\
(\widetilde{\partial}_{\alpha\alpha} \psi)(a_1,\ldots, a_{n})&=&\sum^{n}_{i=1}(-1)^{i+1}\rho(\alpha^{n-1}(a_i))\psi(a_1,\ldots, \widehat{a_i},\ldots, a_{n}) \\
&&+ \sum^{n-1}_{i=1}(-1)^{i+1}\mu(\alpha^{n-1}(a_{n}))\psi(a_1,\ldots,\widehat{a_i}, \ldots,  a_{n-1}, a_i)\nonumber\\
&&-\sum^{n-1}_{i=1}(-1)^{i+1} \psi(\alpha(a_1),\ldots, \widehat{a_i},\ldots,\alpha( a_{n-1}),  a_i\c a_{n})\nonumber\\
&&+\sum_{1\leq i< j\leq n-1} (-1)^{i+j} \psi([a_i, a_j]_C, \alpha(a_1),\ldots, \widehat{a_i}, \ldots, \widehat{a_j},\ldots, \alpha(a_{n})),\nonumber
\end{eqnarray}
\begin{eqnarray}
(\widetilde{\partial}_{\nu \alpha} \varphi)(a_1,\ldots, a_{n})&=& \beta(\varphi(a_1,\ldots, a_{n}))-\varphi(\alpha(a_1),\ldots, \alpha(a_{n}))\\
(\widetilde{\partial}_{\alpha \nu} \psi)(a_1,\ldots, a_{n+1})&=&\sum^{n}_{i=1}(-1)^{i+1} \mu\circ\alpha^{n-2}(a_i\c a_{n+1})\psi(a_1,\ldots, \widehat{a_i}, \ldots,a_n)\nonumber\\
&&-\sum_{1\leq i< j\leq n} (-1)^{i+j}\rho( [\alpha^{n-2}(a_i), \alpha^{n-2}(a_j)]_C)\psi(a_1,\ldots, \widehat{a_i},\ldots, \widehat{a_j},\ldots, a_{n+1}),\nonumber
\end{eqnarray}
where $a_1, \ldots, a_{n+1}\in A$.
The sign given by $(-1)^{\cdot}$ is always determined by the permutation of the $x_{i\cdot}$.
We have the following main theorem.
\begin{theorem}
Let $(V,\rho, \mu, \beta)$ be a representation of a Hom-pre-Lie algebra $(A, \nu, \alpha)$. Further, let
$\widetilde{\partial} :\widetilde{C}^n(A,V)\rightarrow \widetilde{C}^{n+1}(A,V)$ be a map defined by $\widetilde{\partial}(\varphi, \psi ) = (\widetilde{\partial}_{\nu\nu} \varphi - \widetilde{\partial}_{\alpha\nu} \psi, \widetilde{\partial}_{\nu\alpha} \varphi-\widetilde{\partial}_{\alpha\alpha} \psi)$. Then the pair $(\widetilde{C}^\bullet(A,V), \widetilde{\partial})$ is a cohomology complex.
\end{theorem}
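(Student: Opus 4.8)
The theorem asserts that $\widetilde{\partial}\circ\widetilde{\partial}=0$, and the plan is to reduce this to four operator identities and verify each by a cochain-level computation. Writing the differential in matrix form on a column $(\varphi,\psi)^{t}$ with $\varphi\in\widetilde{C}^{\bullet}_{\nu}(A,V)$, $\psi\in\widetilde{C}^{\bullet}_{\alpha}(A,V)$,
\[
\widetilde{\partial}=\begin{pmatrix}\widetilde{\partial}_{\nu\nu}&-\widetilde{\partial}_{\alpha\nu}\\ \widetilde{\partial}_{\nu\alpha}&-\widetilde{\partial}_{\alpha\alpha}\end{pmatrix},
\]
and squaring, the four entries of $\widetilde{\partial}^{2}$ vanish precisely when
\[
\widetilde{\partial}_{\nu\nu}^{2}=\widetilde{\partial}_{\alpha\nu}\widetilde{\partial}_{\nu\alpha},\qquad \widetilde{\partial}_{\alpha\alpha}^{2}=\widetilde{\partial}_{\nu\alpha}\widetilde{\partial}_{\alpha\nu},\qquad \widetilde{\partial}_{\nu\nu}\widetilde{\partial}_{\alpha\nu}=\widetilde{\partial}_{\alpha\nu}\widetilde{\partial}_{\alpha\alpha},\qquad \widetilde{\partial}_{\nu\alpha}\widetilde{\partial}_{\nu\nu}=\widetilde{\partial}_{\alpha\alpha}\widetilde{\partial}_{\nu\alpha}.
\]
The guiding observation is that $\widetilde{\partial}_{\nu\alpha}\varphi=\beta\circ\varphi-\varphi\circ\alpha^{\otimes n}$ is exactly the obstruction to $\varphi$ lying in the compatible subspace $C^{\bullet}(A,V)$ of Section~\ref{sec 1}, on which $\widetilde{\partial}_{\nu\nu}$ restricts to the classical coboundary $\partial$; since that $\partial$ is known to square to zero, any failure of $\widetilde{\partial}_{\nu\nu}^{2}=0$ on the full Hom-space must be carried by $\widetilde{\partial}_{\nu\alpha}$, which is what the first identity expresses.

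The third and fourth identities say that $\widetilde{\partial}_{\alpha\nu}$ and $\widetilde{\partial}_{\nu\alpha}$ are maps of the relevant complexes, and I would prove them by transporting structure maps through summation signs. For the fourth, $\widetilde{\partial}_{\nu\alpha}\widetilde{\partial}_{\nu\nu}=\widetilde{\partial}_{\alpha\alpha}\widetilde{\partial}_{\nu\alpha}$, apply $\beta$ to each of the four sums defining $\widetilde{\partial}_{\nu\nu}\varphi$ and move it inward using the representation relations $\rho(\alpha(x))\circ\beta=\beta\circ\rho(x)$ and $\beta\circ\mu(a)=\mu(\alpha(a))\circ\beta$ together with the multiplicativity relations $\alpha(a\c b)=\alpha(a)\c\alpha(b)$ and $\alpha([a,b]_{C})=[\alpha(a),\alpha(b)]_{C}$; subtracting the cochain obtained by precomposing $\widetilde{\partial}_{\nu\nu}\varphi$ with $\alpha^{\otimes n+1}$ then reproduces, sum for sum, the four sums of $\widetilde{\partial}_{\alpha\alpha}(\beta\circ\varphi-\varphi\circ\alpha^{\otimes n})$. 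The third identity is entirely parallel: one commutes the outer operators $\mu\circ\alpha^{k}(a_{i}\c a)$ and $\rho([\alpha^{k}a_{i},\alpha^{k}a_{j}]_{C})$ occurring in $\widetilde{\partial}_{\alpha\nu}$ past the four sums defining $\widetilde{\partial}_{\nu\nu}$, using the same relations plus $\rho([x,y]_{C})\circ\beta=\rho(\alpha(x))\circ\rho(y)-\rho(\alpha(y))\circ\rho(x)$. These two identities are longer but contain no cancellation subtlety.

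The core of the argument is the first identity, $\widetilde{\partial}_{\nu\nu}^{2}=\widetilde{\partial}_{\alpha\nu}\widetilde{\partial}_{\nu\alpha}$. For $\varphi\in\widetilde{C}^{n}_{\nu}(A,V)$ I would expand $\widetilde{\partial}_{\nu\nu}(\widetilde{\partial}_{\nu\nu}\varphi)$ on $n+2$ arguments into its families of terms and group them by combinatorial shape exactly as in the classical proof that a Chevalley--Eilenberg-type coboundary squares to zero --- a $\rho$ outside a $\rho$, a $\rho$ outside a $\mu$, a $\mu$ outside a $\mu$, a $\rho$ or $\mu$ outside a term carrying a product $a_{i}\c a_{k}$, a $\rho$ or $\mu$ outside a term carrying a bracket $[a_{i},a_{j}]_{C}$, a product inside a product, a bracket inside a bracket, a bracket inside a product, and so on. Each family cancels in pairs as in the untwisted case, except that every cancellation which in the untwisted case would invoke $\rho([a_{i},a_{j}]_{C})\circ\beta=\rho(\alpha a_{i})\circ\rho(a_{j})-\rho(\alpha a_{j})\circ\rho(a_{i})$ or the $\mu$-relation $\mu(\alpha b)\circ\mu(a)-\mu(a\c b)\circ\beta=\mu(\alpha b)\circ\rho(a)-\rho(\alpha a)\circ\mu(b)$ here leaves a residual term carrying a factor $\beta$ in front of $\varphi$ while the remaining arguments of $\varphi$ are $\alpha$-twisted, i.e.\ a term of the shape $\rho(\cdots)\bigl(\beta\varphi-\varphi\circ\alpha^{\otimes n}\bigr)(\cdots)$ or $\mu(\cdots)\bigl(\beta\varphi-\varphi\circ\alpha^{\otimes n}\bigr)(\cdots)$. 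Collecting all such residues with their signs yields, on inspection of the defining formula for $\widetilde{\partial}_{\alpha\nu}$, precisely $(\widetilde{\partial}_{\alpha\nu}\widetilde{\partial}_{\nu\alpha}\varphi)(a_{1},\dots,a_{n+2})$. The second identity $\widetilde{\partial}_{\alpha\alpha}^{2}=\widetilde{\partial}_{\nu\alpha}\widetilde{\partial}_{\alpha\nu}$ follows by the same mechanism applied to the closely analogous formula for $\widetilde{\partial}_{\alpha\alpha}$, the residual $\beta$-versus-$\alpha^{\otimes n}$ discrepancies assembling this time into $\widetilde{\partial}_{\nu\alpha}\circ\widetilde{\partial}_{\alpha\nu}$.

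The main obstacle is this first identity: the expansion of $\widetilde{\partial}_{\nu\nu}^{2}$ produces many terms, and one must both reproduce the classical pairwise cancellations in the Hom-twisted setting --- keeping exact track of the powers of $\alpha$ carried by each argument and each outer operator --- and correctly recognise the non-cancelling residues as $\widetilde{\partial}_{\alpha\nu}\widetilde{\partial}_{\nu\alpha}\varphi$ rather than as unaccounted-for leftover. Organising the bookkeeping by term-shape as in the classical argument, and isolating the $\beta$-versus-$\alpha^{\otimes n}$ defect at each application of the representation axioms, keeps the computation under control; identities (3) and (4) are then comparatively routine and (2) mirrors (1). Once all four identities are established, the matrix computation gives $\widetilde{\partial}^{2}=0$, so $(\widetilde{C}^{\bullet}(A,V),\widetilde{\partial})$ is a cochain complex.
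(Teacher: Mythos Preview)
Your proposal is correct and follows essentially the same route as the paper: both reduce $\widetilde{\partial}^{2}=0$ to the same four operator identities and then concentrate on the first one, $\widetilde{\partial}_{\nu\nu}^{2}=\widetilde{\partial}_{\alpha\nu}\widetilde{\partial}_{\nu\alpha}$, via a full expansion and term-by-term matching, leaving the remaining three as analogous. Your framing of $\widetilde{\partial}_{\nu\alpha}$ as the compatibility obstruction and of the residual terms as the $\beta$-versus-$\alpha^{\otimes n}$ defect is a cleaner conceptual gloss than the paper's purely computational bookkeeping, but the underlying argument is the same.
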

\begin{proof}
In order to prove that $\widetilde{\partial}^2=0$, we  will give it here to some extent.
 Showing $\widetilde{\partial}^2=0$ is same as the showing following equations:
\begin{align*}
& \widetilde{\partial}_{\nu \nu} \widetilde{\partial}_{\nu \nu} + \widetilde{\partial}_{\nu \nu} \widetilde{\partial}_{\alpha \nu} - \widetilde{\partial}_{\alpha \nu} \widetilde{\partial}_{\alpha \alpha} - \widetilde{\partial}_{\alpha \nu} \widetilde{\partial}_{\nu \alpha} = 0,\\
& -\widetilde{\partial}_{\alpha \alpha} \widetilde{\partial}_{\alpha \alpha} + \widetilde{\partial}_{\nu \alpha} \widetilde{\partial}_{\alpha \nu} - \widetilde{\partial}_{\alpha \alpha} \widetilde{\partial}_{\nu \alpha} + \widetilde{\partial}_{\nu \alpha} \widetilde{\partial}_{\nu \nu} = 0.
\end{align*}
For this we verify the following equations
\begin{align*}
& \widetilde{\partial}_{\nu \nu} \widetilde{\partial}_{\nu \nu}= \widetilde{\partial}_{\alpha \nu} \widetilde{\partial}_{\nu \alpha},\\
& \widetilde{\partial}_{\nu \nu} \widetilde{\partial}_{\alpha \nu} = \widetilde{\partial}_{\alpha \nu} \widetilde{\partial}_{\alpha \alpha},\\
& \widetilde{\partial}_{\alpha \alpha} \widetilde{\partial}_{\alpha \alpha} = \widetilde{\partial}_{\nu \alpha} \widetilde{\partial}_{\alpha \nu},\\
& \widetilde{\partial}_{\nu \alpha} \widetilde{\partial}_{\nu \nu} = \widetilde{\partial}_{\alpha \alpha} \widetilde{\partial}_{\nu \alpha}.
\end{align*}
We shall only verify the first equality as computations of such verifications is very lengthy. The verification of the other three equalities can be done in a similar manner.

\begin{eqnarray}
&&\widetilde{\partial}_{\nu\nu} \widetilde{\partial}_{\nu\nu} \varphi(a_1,\ldots, a_{n+2})\nonumber\\
&=&\sum^{n+1}_{i=1}(-1)^{i+1}\rho(\alpha^{n}(a_i))\widetilde{\partial}_{\nu\nu}\varphi(a_1,\ldots, \widehat{a_i}, \ldots, a_{n+2}) \nonumber\\
&&+ \sum^{n+1}_{i=1}(-1)^{i+1}\mu(\alpha^{n}(a_{n+2}))\widetilde{\partial}_{\nu\nu}\varphi(a_1,\ldots,\widehat{a_i},\ldots,  a_{n+1}, a_i)\nonumber\\
&&-\sum^{n+1}_{i=1}(-1)^{i+1} \widetilde{\partial}_{\nu\nu}\varphi(\alpha(a_1),\ldots, \widehat{a_i}, \ldots,\alpha( a_{n+1}),  a_i\c a_{n+2})\nonumber\\
&&+\sum_{1\leq i< j\leq n+1} (-1)^{i+j} \widetilde{\partial}_{\nu\nu}\varphi([a_i, a_j]_C, \alpha(a_1),\ldots, \widehat{a_i}, \ldots, \widehat{a_j},..., \alpha(a_{n+2})),\nonumber\\
&=&\sum^{n+1}_{i=1}\sum^{n}_{\substack{j=1\\ j< i}} (-1)^{i+j} \rho(\alpha^{n}(a_i))\rho(\alpha^{n-1}(a_j))\varphi(a_1,\ldots, \widehat{a_j}, \ldots,\widehat{a_i} \ldots a_{n+2})\\
&&+\sum^{n+1}_{i=1}\sum^{n}_{\substack{j=1\\ j> i}} (-1)^{i+j-1} \rho(\alpha^{n}(a_i))\rho(\alpha^{n-1}(a_j))\varphi(a_1,\ldots, \widehat{a_i}, \ldots,\widehat{a_j} \ldots a_{n+2})\\
&& +\sum^{n+1}_{i=1}\sum^{n}_{\substack{j=1\\ j< i}} (-1)^{i+j}\rho(\alpha^{n}(a_i)) \mu(\alpha^{n-1}(a_{n+2})) \varphi(a_1,\ldots, \widehat{a_j}, \ldots,\widehat{a_i} \ldots a_{n+1}, a_{j})\\
&&+\sum^{n+1}_{i=1}\sum^{n}_{\substack{j=1\\ j> i}} (-1)^{i+j-1} \rho(\alpha^{n}(a_i))\rho(\alpha^{n-1}(a_j))\varphi(a_1,\ldots, \widehat{a_i}, \ldots,\widehat{a_j} \ldots a_{n+1}, a_j)\\
&& -\sum^{n+1}_{i=1}\sum^{n}_{\substack{j=1\\ j< i}} (-1)^{i+j}\rho(\alpha^{n}(a_i))\varphi(\alpha(a_1),\ldots, \widehat{a_j}, \ldots,\widehat{a_i} \ldots \alpha(a_{n+1}), a_{j}\cdot a_{n+2})\\
&& -\sum^{n+1}_{i=1}\sum^{n}_{\substack{j=1\\ j> i}} (-1)^{i+j-1}\rho(\alpha^{n}(a_i))\varphi(\alpha(a_1),\ldots, \widehat{a_i}, \ldots,\widehat{a_j} \ldots \alpha(a_{n+1}), a_{j}\cdot a_{n+2})\\
&& -\sum^{n+1}_{i=1}\sum_{j< k<i}(-1)^{i+j+k}\rho(\alpha^{n}(a_i))\varphi([a_j, a_k]_C, \alpha(a_1),\ldots, \widehat{a_j}, \ldots, \widehat{a_k},\ldots, \widehat{a_i}\ldots, \alpha(a_{n+2}))\nonumber\\
&&\\
&& +\sum^{n+1}_{i=1}\sum_{ j< i<k}(-1)^{i+j+k}\rho(\alpha^{n}(a_i))\varphi([a_j, a_k]_C, \alpha(a_1),\ldots, \widehat{a_j}, \ldots, \widehat{a_i},\ldots, \widehat{a_k}\ldots, \alpha(a_{n+2}))\nonumber\\
&&\\
&&-\sum^{n+1}_{i=1}\sum_{ i< j<k}(-1)^{i+j+k}\rho(\alpha^{n}(a_i))\varphi([a_j, a_k]_C, \alpha(a_1),\ldots, \widehat{a_i}, \ldots, \widehat{a_j},\ldots, \widehat{a_k}\ldots, \alpha(a_{n+2}))\nonumber\\
&&
\end{eqnarray}
\begin{eqnarray}
&& +\sum^{n+1}_{i=1}\sum^{n}_{\substack{j=1\\ j< i}} (-1)^{i+j} \mu(\alpha^{n}(a_{n+2}))\rho(\alpha^{n-1}(a_j))\varphi(a_1,\ldots, \widehat{a_j}, \ldots,\widehat{a_i} \ldots a_{n+1}, a_i)\\
&& +\sum^{n+1}_{i=1}\sum^{n}_{\substack{j=1\\ j> i}} (-1)^{i+j-1} \mu(\alpha^{n}(a_{n+2}))\rho(\alpha^{n-1}(a_j))\varphi(a_1,\ldots, \widehat{a_j}, \ldots,\widehat{a_i} \ldots a_{n+1}, a_i)~~~~\\
&&+\sum^{n+1}_{i=1}\sum^{n}_{\substack{j=1\\ j< i}} (-1)^{i+j} \mu(\alpha^{n}(a_{n+2}))\mu(\alpha^{n-1}(a_i))\varphi(a_1,\ldots, \widehat{a_j}, \ldots,\widehat{a_i} \ldots a_{n+1}, a_j)\\
&& +\sum^{n+1}_{i=1}\sum^{n}_{\substack{j=1\\ j> i}} (-1)^{i+j-1} \mu(\alpha^{n}(a_{n+2}))\mu(\alpha^{n-1}(a_i))\varphi(a_1,\ldots, \widehat{a_i}, \ldots,\widehat{a_j} \ldots a_{n+1}, a_j)~~~~\\
&& -\sum^{n+1}_{i=1}\sum^{n}_{\substack{j=1\\ j< i}} (-1)^{i+j} \mu(\alpha^{n}(a_{n+2}))\varphi(\alpha(a_1),\ldots, \widehat{a_j}, \ldots,\widehat{a_i} \ldots \alpha(a_{n+1}), a_j\cdot a_i)\\
&& -\sum^{n+1}_{i=1}\sum^{n}_{\substack{j=1\\ j>i}} (-1)^{i+j-1} \mu(\alpha^{n}(a_{n+2}))\varphi(\alpha(a_1),\ldots, \widehat{a_i}, \ldots,\widehat{a_j} \ldots \alpha(a_{n+1}), a_j\cdot a_i)\\
&&-\sum^{n+1}_{i=1}\sum_{j< k<i}(-1)^{i+j+k}\mu(\alpha^{n}(a_{n+2}))\varphi([a_j, a_k]_C, \alpha(a_1),\ldots, \widehat{a_j}, \ldots, \widehat{a_k},\ldots, \widehat{a_i}\ldots, \alpha(a_{n+1}),\alpha(a_{i}))\nonumber\\
&&\\
&&+\sum^{n+1}_{i=1}\sum_{j< i<k}(-1)^{i+j+k}\mu(\alpha^{n}(a_{n+2}))\varphi([a_j, a_k]_C, \alpha(a_1),\ldots, \widehat{a_j}, \ldots, \widehat{a_i},\ldots, \widehat{a_k}\ldots, \alpha(a_{n+1}),\alpha(a_{i}))\nonumber\\
&&\\
&&-\sum^{n+1}_{i=1}\sum_{i< j<k}(-1)^{i+j+k}\mu(\alpha^{n}(a_{n+2}))\varphi([a_j, a_k]_C, \alpha(a_1),\ldots, \widehat{a_i}, \ldots, \widehat{a_j},\ldots, \widehat{a_k}\ldots, \alpha(a_{n+1}),\alpha(a_{i}))\nonumber\\
&&\\
&& -\sum^{n+1}_{i=1}\sum^{n}_{\substack{j=1\\ j< i}} (-1)^{i+j} \rho(\alpha^{n}(a_{j}))\varphi(\alpha(a_1),\ldots, \widehat{a_j}, \ldots,\widehat{a_i} \ldots \alpha(a_{n+1}), a_i\cdot a_{n+2})\\
&& -\sum^{n+1}_{i=1}\sum^{n}_{\substack{j=1\\ j> i}} (-1)^{i+j-1} \rho(\alpha^{n}(a_{j}))\varphi(\alpha(a_1),\ldots, \widehat{a_i}, \ldots,\widehat{a_j} \ldots \alpha(a_{n+1}), a_i\cdot a_{n+2})\\
&& - \sum^{n+1}_{i=1}\sum^{n}_{\substack{j=1\\ j< i}} (-1)^{i+j} \mu(\alpha^{n-1}(a_{i})\cdot \alpha^{n-1}(a_{n+2}))\varphi(\alpha(a_1),\ldots, \widehat{a_j}, \ldots,\widehat{a_i} \ldots \alpha(a_{n+1}), \a(a_j))\\
&& - \sum^{n+1}_{i=1}\sum^{n}_{\substack{j=1\\ j> i}} (-1)^{i+j} \mu(\alpha^{n-1}(a_{i})\cdot \alpha^{n-1}(a_{n+2}))\varphi(\alpha(a_1),\ldots, \widehat{a_i}, \ldots,\widehat{a_j} \ldots \alpha(a_{n+1}), \a(a_j))
\end{eqnarray}
\begin{eqnarray}
&& +\sum^{n+1}_{i=1}\sum^{n}_{\substack{j=1\\ j< i}} (-1)^{i+j}\varphi(\alpha^2(a_1),\ldots, \widehat{a_j}, \ldots,\widehat{a_i} \ldots \alpha^2(a_{n+1}),\alpha(x_j)\cdot( a_i\cdot a_{n+2}))\\
&& +\sum^{n+1}_{i=1}\sum^{n}_{\substack{j=1\\ j> i}} (-1)^{i+j}\varphi(\alpha^2(a_1),\ldots, \widehat{a_i}, \ldots,\widehat{a_j} \ldots \alpha^2(a_{n+1}),\alpha(x_j)\cdot( a_i\cdot a_{n+2}))\\
&& +\sum^{n+1}_{i=1}\sum_{ j< k<i}(-1)^{i+j+k}\varphi([\alpha(a_j), \alpha(a_k)]_C, \alpha^2(a_1),\ldots, \widehat{a_j}, \ldots, \nonumber\\
&&\widehat{a_k},\ldots, \widehat{a_i}\ldots, \alpha^2(a_{n+1}),\alpha(a_{i})\cdot \alpha(a_{n+2}))\\
&&-\sum^{n+1}_{i=1}\sum_{ j< i<k}(-1)^{i+j+k}\varphi([\alpha(a_j), \alpha(a_k)]_C, \alpha^2(a_1),\ldots, \widehat{a_j}, \ldots, \nonumber\\
&&\widehat{a_i},\ldots, \widehat{a_k}\ldots, \alpha^2(a_{n+1}),\alpha(a_{i})\cdot \alpha(a_{n+2}))\\
&&+\sum^{n+1}_{i=1}\sum_{ i< j<k}(-1)^{i+j+k}\varphi([\alpha(a_j), \alpha(a_k)]_C, \alpha^2(a_1),\ldots, \widehat{a_i}, \ldots, \nonumber\\
&&\widehat{a_j},\ldots, \widehat{a_k}\ldots, \alpha^2(a_{n+1}),\alpha(a_{i})\cdot \alpha(a_{n+2}))\\
&& + \sum_{1\leq i< j\leq n+1}(-1)^{i+j} \rho([\alpha^{n-1}(a_i), \alpha^{n-1}(a_j)]_C)\varphi(\alpha(a_1),\ldots, \widehat{a_i}, \ldots,\widehat{a_j} \ldots \alpha(a_{n+1}), \a(a_{n+2}))~~~~~~~~~\\
&&  +\sum^{n+1}_{1\leq i\leq j}\sum_{k< i} (-1)^{i+j+k+1} \rho(\alpha^{n}(a_{k}))\varphi([a_i, a_j]_C, \alpha(a_1),\ldots, \widehat{a_k}\ldots, \widehat{a_i}, \ldots,\widehat{a_j} \ldots, \a(a_{n+2}))\\
&& +\sum^{n+1}_{1\leq i\leq j}\sum_{i< k< j} (-1)^{i+j+k} \rho(\alpha^{n}(a_{k}))\varphi([a_i, a_j]_C, \alpha(a_1),\ldots, \widehat{a_i}\ldots, \widehat{a_k}, \ldots,\widehat{a_j} \ldots, \a(a_{n+2}))\\
&& +\sum^{n+1}_{1\leq i\leq j}\sum_{j<k} (-1)^{i+j+k+1} \rho(\alpha^{n}(a_{k}))\varphi([a_i, a_j]_C, \alpha(a_1),\ldots, \widehat{a_i}\ldots, \widehat{a_j}, \ldots,\widehat{a_k} \ldots, \a(a_{n+2}))\\
&& +\sum^{n+1}_{1\leq i\leq j}(-1)^{i+j} \mu(\alpha^{n}(a_{n+2}))\varphi( \alpha(a_1),\ldots, \widehat{a_i}\ldots, \widehat{a_j},  \ldots, \a(a_{n+1}), [a_i, a_j]_C)\\
&& -\sum^{n+1}_{1\leq i\leq j}\sum_{k< i} (-1)^{i+j+k} \mu(\alpha^{n}(a_{n+2}))\varphi([a_i, a_j]_C, \alpha(a_1),\ldots, \widehat{a_k}\ldots, \widehat{a_i}, \ldots,\widehat{a_j} \ldots, \a(a_{k}))\\
&&+\sum^{n+1}_{1\leq i\leq j}\sum_{i< k< j} (-1)^{i+j+k} \mu(\alpha^{n}(a_{n+2}))\varphi([a_i, a_j]_C, \alpha(a_1),\ldots, \widehat{a_i}\ldots, \widehat{a_k}, \ldots,\widehat{a_j} \ldots, \a(a_{k}))\\
&& -\sum^{n+1}_{1\leq i\leq j}\sum_{j<k} (-1)^{i+j+k+1} \mu(\alpha^{n}(a_{n+2}))\varphi([a_i, a_j]_C, \alpha(a_1),\ldots, \widehat{a_i}\ldots, \widehat{a_j}, \ldots,\widehat{a_k} \ldots, \a(a_{k}))\\
&&-\sum^{n+1}_{1\leq i\leq j}(-1)^{i+j} \varphi(\alpha^2(a_1),\ldots, \widehat{a_i}\ldots, \widehat{a_j},\ldots, \alpha^2(a_{n+1}) [a_i, a_j]_C\cdot \a(a_{n+2}))
\end{eqnarray}
\begin{eqnarray}
&& +\sum^{n+1}_{1\leq i\leq j}\sum_{k<i}(-1)^{i+j+k}\varphi([\alpha(a_i), \alpha(a_j)]_C, \alpha^2(a_1),\ldots, \widehat{a_k}\ldots, \widehat{a_i}, \ldots,\nonumber\\
&&\widehat{a_j} \ldots, \a^2(a_{n+1}), \a(a_k)\cdot \a(a_{n+2}))\\
&&-\sum^{n+1}_{1\leq i\leq j}\sum_{i<k<j}(-1)^{i+j+k}\varphi([\alpha(a_i), \alpha(a_j)]_C, \alpha^2(a_1),\ldots, \widehat{a_i}\ldots, \widehat{a_k}, \ldots,\nonumber\\
&&\widehat{a_j} \ldots, \a^2(a_{n+1}), \a(a_k)\cdot \a(a_{n+2}))\\
&&+\sum^{n+1}_{1\leq i\leq j}\sum_{j<k}(-1)^{i+j+k}\varphi([\alpha(a_i), \alpha(a_j)]_C, \alpha^2(a_1),\ldots, \widehat{a_i}\ldots, \widehat{a_j}, \ldots,\nonumber\\
&&\widehat{a_k} \ldots, \a^2(a_{n+1}), \a(a_k)\cdot \a(a_{n+2}))\\
&&-\sum^{n+1}_{1\leq i\leq j}\sum_{l< i} (-1)^{i+j+l} \varphi([[a_i, a_j]_C,\a(a_l)]_C,  \alpha^2(a_1),\ldots, \widehat{a_l}\ldots, \widehat{a_i}, \ldots,\widehat{a_j} \ldots, \a^2(a_{n+2}))~~~~~~~~~~~\\
&&+ \sum^{n+1}_{1\leq i\leq j}\sum_{ i<l<j} (-1)^{i+j+l} \varphi([[a_i, a_j]_C,\a(a_l)]_C,  \alpha^2(a_1),\ldots, \widehat{a_i}\ldots, \widehat{a_l}, \ldots,\widehat{a_j} \ldots, \a^2(a_{n+2}))~~~\\
&& -\sum^{n+1}_{1\leq i\leq j}\sum_{ j<l} (-1)^{i+j+l} \varphi([[a_i, a_j]_C,\a(a_l)]_C,  \alpha^2(a_1),\ldots, \widehat{a_i}\ldots, \widehat{a_j}, \ldots,\widehat{a_l} \ldots, \a^2(a_{n+2}))~~~\\
&& +\sum^{n+1}_{1\leq i\leq j}\sum_{ k<l<i}(-1)^{i+j+l+k} \varphi([\alpha(a_k), \alpha(a_l)]_C, [\alpha(a_i), \alpha(a_j)]_C,  \alpha^2(a_1),\ldots,\nonumber\\
&& \widehat{a_k}\ldots, \widehat{a_l}, \ldots,\widehat{a_i} \ldots ,\widehat{a_j} \ldots,  \a^2(a_{n+2}))\\
&& -\sum^{n+1}_{1\leq i\leq j}\sum_{ k<i<l<j}(-1)^{i+j+l+k} \varphi([\alpha(a_k), \alpha(a_l)]_C, [\alpha(a_i), \alpha(a_j)]_C,  \alpha^2(a_1),\ldots,\nonumber\\
&& \widehat{a_k}\ldots, \widehat{a_i}, \ldots,\widehat{a_l} \ldots ,\widehat{a_j} \ldots,  \a^2(a_{n+2}))\\
&& +\sum^{n+1}_{1\leq i\leq j}\sum_{ k<i<j<l}(-1)^{i+j+l+k} \varphi([\alpha(a_k), \alpha(a_l)]_C, [\alpha(a_i), \alpha(a_j)]_C,  \alpha^2(a_1),\ldots,\nonumber\\
&& \widehat{a_k}\ldots, \widehat{a_i}, \ldots,\widehat{a_j} \ldots ,\widehat{a_l} \ldots,  \a^2(a_{n+2}))\\
&& -\sum^{n+1}_{1\leq i\leq j}\sum_{ i<k<l<j}(-1)^{i+j+l+k} \varphi([\alpha(a_k), \alpha(a_l)]_C, [\alpha(a_i), \alpha(a_j)]_C,  \alpha^2(a_1),\ldots,\nonumber\\
&& \widehat{a_i}\ldots, \widehat{a_k}, \ldots,\widehat{a_l} \ldots ,\widehat{a_j} \ldots,  \a^2(a_{n+2}))\\
&& +\sum^{n+1}_{1\leq i\leq j}\sum_{ i<k<l<j}(-1)^{i+j+l+k} \varphi([\alpha(a_k), \alpha(a_l)]_C, [\alpha(a_i), \alpha(a_j)]_C,  \alpha^2(a_1),\ldots,\nonumber\\
&& \widehat{a_i}\ldots, \widehat{a_k}, \ldots,\widehat{a_j} \ldots ,\widehat{a_l} \ldots,  \a^2(a_{n+2}))\\
&& -\sum^{n+1}_{1\leq i\leq j}\sum_{ j<k<l}(-1)^{i+j+l+k} \varphi([\alpha(a_k), \alpha(a_l)]_C, [\alpha(a_i), \alpha(a_j)]_C,  \alpha^2(a_1),\ldots,\nonumber\\
&& \widehat{a_i}\ldots, \widehat{a_j}, \ldots,\widehat{a_k} \ldots ,\widehat{a_l} \ldots,  \a^2(a_{n+2})).
\end{eqnarray}
It is clearly to see that
\begin{eqnarray*}
&& (2.13)+(2.33) = (2.12)+(2.34)  \\
&=&(2.11)+(2.35)= (2.9)-(2.24)  \\
&=& (2.10)-(2.23)= (2.18)+(2.19)+(2.36) = 0,
\end{eqnarray*}
and
\begin{eqnarray*}
&&(2.20)+(2.39) = (2.21)+(2.38) \\
&=& (2.22)+(2.37)=(2.29)-(2.43) \\
&=& (2.30)-(2.42)= (2.31)-(2.41) = 0.
\end{eqnarray*}
Using the definition of Hom-pre-Lie algebra, we have
\begin{eqnarray*}
(2.27)+(2.28)-(2.40) = 0.
\end{eqnarray*}
Using the Hom-Jacobi identity,   we have (2.44)+(2.45)+(2.46) = 0. Finally, it easy to see that (2.47)+$\cdots$+(2.52) = 0.

Furthermore, we have 
\begin{eqnarray}
&&\widetilde{\partial}_{\alpha\nu} \widetilde{\partial}_{\nu\alpha}\varphi(a_1, \cdots, a_{n+2})\nonumber\\
&=& \sum^{n+1}_{i=1}(-1)^{i+1} \mu(\alpha^{n-1}(a_i\c a_{n+2}))\widetilde{\partial}_{\nu\alpha}\psi(a_1,\ldots, \widehat{a_i}, \ldots,a_{n+1})\nonumber\\
&&-\sum_{1\leq i< j\leq n+1} (-1)^{i+j}\rho( [\alpha^{n-1}(a_i), \alpha^{n-1}(a_j)]_C)\widetilde{\partial}_{\nu\alpha}\psi(a_1,\ldots, \widehat{a_i},\ldots, \widehat{a_j},\ldots, a_{n+2})\nonumber\\
&=& \sum^{n+1}_{i=1}(-1)^{i+1} \mu(\alpha^{n-1}(a_i\c a_{n+2}))\b\psi(a_1,\ldots, \widehat{a_i}, \ldots,a_{n+1})\\
&&-\sum^{n+1}_{i=1}(-1)^{i+1} \mu(\alpha^{n-1}(a_i\c a_{n+2}))\psi(\a(a_1),\ldots, \widehat{a_i}, \ldots, \a(a_{n+1}))\\
&&-\sum_{1\leq i< j\leq n+1} (-1)^{i+j}\rho( [\alpha^{n-1}(a_i), \alpha^{n-1}(a_j)]_C)\beta\psi(a_1,\ldots, \widehat{a_i},\ldots, \widehat{a_j},\ldots, a_{n+2})\\
&& +\sum_{1\leq i< j\leq n+1} (-1)^{i+j}\rho( [\alpha^{n-1}(a_i), \alpha^{n-1}(a_j)]_C)\psi(\a(a_1),\ldots, \widehat{a_i},\ldots, \widehat{a_j},\ldots, \a(a_{n+2})).~~~~~~~~~
\end{eqnarray}
Thus, we have  (2.5)+(2.6)+(2.32)-(2.55)-(2.56) = 0 and 
(2.7)+(2.14)+(2.16)-(2.25) -(2.53)-(2.54) = (2.8)+(2.15)+(2.17)-(2.26)-(2.53)-(2.54)  = 0.

Thus, $\widetilde{\partial_{\nu \nu}} \widetilde{\partial_{\nu \nu}} = \widetilde{\partial}_{\alpha \nu} \widetilde{\partial}_{\nu \alpha}$.
\end{proof}
In this paper, we will only consider cohomology of $A$ with coefficients over itself and denote it as $\widetilde{H^\ast} (A, A )$.

\begin{remark}
Observe that the $\alpha$-type cohomology for multiplicative Hom-pre-Lie algebras generalizes the cohomology defined in the Section \ref{sec 1} (cf. \ref{prelim coho}). For this consider only those elements in $\widetilde{C^n}(A, A)$ where second summand is zero, that is, $$\widetilde{C}_\alpha^n(A,A) = \lbrace 0 \rbrace.$$ Therefore, we have elements of the form $(\phi, 0)$. We define a subcomplex of $\widetilde{C^n}(A, A)$ as follows:
\begin{align*}
{C}_\alpha^n(A, A) & = \lbrace (\phi, 0) \in \widetilde{C^n}(A, A) \mid  \widetilde{\partial}_{\nu \alpha} \phi = 0 \rbrace\\
                                & = \lbrace \phi \in \widetilde{C}_\nu^n(A, A) \mid \beta \circ \phi = \phi \circ \alpha^{\otimes n}\rbrace.
\end{align*}
The map $\widetilde{\partial}_{\nu \nu}$ defines a diffential on this complex and this complex is same as the complex defined in the Section \ref{sec 1}. Thus, $\alpha$-type cohomology generalizes the cohomology developed in Section \ref{sec 1}.
\end{remark}
\section{Formal deformation theory of Hom-pre-Lie algebras}\label{sec 3}
In this section, we show that the $\alpha$-type cohomology for Hom-pre-Lie algebras controls one-parameter formal deformation of Hom-pre-Lie algebras. Unlike \cite{LST20}, our formal deformation takes into account the structure map $\alpha$.
\begin{definition} \label{formal deform defn}
\emph{
Let $(A, \nu, \alpha)$ be a Hom-pre-Lie algebra over $\mathbb{K}$. A one-parameter formal deformation of $A$ is given by a $\mathbb{K}[[t]]$-bilinear map $\nu_t : A[[t]] \times A[[t]] \to A[[t]]$, and a $\mathbb{K}[[t]]$-linear map $\alpha_t : A[[t]] \to A[[t]]$ of the forms
$$\nu_t = \sum_{i\geq 0} \nu_i t^i,~~~\alpha_t = \sum_{i\geq 0} \alpha_i t^i$$
such that
\begin{enumerate}
\item For all $i\geq 0$,~ $\nu_i : A \times A \to A$ is a $\mathbb{K}$-bilinear map, and $\alpha_i : A \to A$ is a $\mathbb{K}$-linear map.
\item $\nu_0 (a, b) = \nu(a,b)=a\c b$, is the the multiplication of $A$ and $\alpha_0 =\alpha$ is the given structure map on $A$.
\item  \label{deform cond 3}
$\nu_t(\nu_t(a,b), \alpha_t(c)) - \nu_t (\alpha_t(a), \nu_t(b,c)) = \nu_t(\nu_t(b,a), \alpha_t(c)) - \nu_t (\alpha_t(b), \nu_t(a,c))$ for all $a,b,c\in A.$
\item The map $\alpha_t$ is multiplicative, that is, $\nu_t (\alpha_t(a), \alpha_t(b)) = \alpha_t (\nu_t(a,b))$, for all $a, b\in A$. \label{deform cond 4}
\end{enumerate}
}
\end{definition}
From the Condition \ref{deform cond 3}, we have the following equations for all $n\geq 0$.
 \begin{align}\label{deform hom-pre-lie 2}
 \sum_{\substack{i+j+k = n\\i,j,k\geq 0}} \nu_i(\nu_j(a,b), \alpha_k(c)) - \nu_i (\alpha_j(a), \nu_k(b,c)) - \nu_i(\nu_j(b,a), \alpha_k(c)) + \nu_i (\alpha_j(b), \nu_k(a,c)) = 0.
 \end{align}
 The Condition \ref{deform cond 4} is equivalent to the following equation:
 \begin{align} \label{deform alpha}
 \sum_{\substack{i+j+k = n\\i,j,k\geq 0}} \nu_i (\alpha_j(a), \alpha_k(b)) - \sum_{\substack{i+j=n\\i,j\geq 0}} \alpha_i (\nu_j(a,b))=0.
 \end{align}
 For a Hom-pre-Lie algebra $(A, \nu, \alpha)$, an $\alpha_k$-associator is defined as
 \begin{align}
 \nu_i \circ_{\alpha_k} \nu_j (a, b, c) =  \nu_i(\nu_j(a,b), \alpha_k(c)) - \nu_i (\alpha_k(a), \nu_j(b,c)).
 \end{align}
 Thus, Equation \ref{deform hom-pre-lie 2} may be written in terms of associator $\alpha_j$ as follows:
 \begin{align}\label{deorm associator}
 \sum_{\substack{i+j+k=n\\ i,j,k\geq 0}} ( \nu_i \circ_{\alpha_j} \nu_k) (a, b, c) -   \sum_{\substack{i+j+k=n\\ i,j,k\geq 0}} (\nu_i \circ_{\alpha_k} \nu_j) (b, a, c) =0.
 \end{align}
 We can rewrite Equation \ref{deorm associator} as follows:
 \begin{align}\label{deform diff equ}
 (\widetilde{\partial}_{\nu\nu} \nu_n - \widetilde{\partial}_{\alpha\nu}\alpha_n)(a, b, c) =  \sum_{\substack{i+j+k=n\\0\leq  i,j,k\leq n-1}} ( \nu_i \circ_{\alpha_k} \nu_j) (a, b, c) -   \sum_{\substack{i+j+k=n\\ 0\leq i,j,k\leq n-1}} (\nu_i \circ_{\alpha_k} \nu_j) (b, a, c). 
 \end{align}
 Observe that
 \begin{align*}
 (\widetilde{\partial}_{\nu\nu} \nu_n)(a, b, c) = &\alpha(a)\c \nu_n(b,c) - \alpha(b)\c \nu_n(a,c) + \nu_n(b,a)\c \alpha(c) - \nu_n(a, b)\c \alpha(c) - \nu_n(\alpha(b), a\c c)  \\
& + \nu_n(\alpha(a), b\c c) - \nu_n(a\c b, \alpha(c)) + \nu_n(b\c a, \alpha(c)),
 \end{align*}
 and
 \begin{align*}
 (\widetilde{\partial}_{\alpha\nu}\alpha_n)(a, b, c) = \alpha_n(b) \c (a \c c) - \alpha_n(a) (b\c c)+ (a \c b) \c \alpha_n(c) - (b\c a)\c \alpha_n(c)
 \end{align*}
 The Equation \ref{deform alpha} is same as the following equation involving differentials.
 \begin{align}\label{deform alpha diff equ}
 (\widetilde{\partial}_{\nu\alpha}\nu_n- \widetilde{\partial}_{\alpha\alpha} \alpha_n)(a, b) = \sum_{\substack{i+j+k = n\\0\leq i,j,k\leq n-1}} \nu_i (\alpha_j(a), \alpha_k(b)) - \sum_{\substack{i+j=n\\ 0\leq i,j\leq n-1}} \alpha_i (\nu_j(a,b)).
 \end{align}
 Note that
 \begin{align*}
 (\widetilde{\partial}_{\alpha\alpha} \alpha_n) (a,b) = \alpha(a)\c \alpha_n(b) + \alpha_n(a)\c \alpha(b) - \alpha_n(a\c b).
 \end{align*}
 and 
  \begin{align*}
 (\widetilde{\partial}_{\nu\alpha} \nu_n) (a,b) = \alpha(\nu_n(a,b)) -\nu_n(\alpha(a), \alpha(b)).
 \end{align*}

 For $n=0$, from the Equation \ref{deform hom-pre-lie 2}, we have
 \begin{align*}
 (a.b).\alpha(c) - \alpha(a).(b.c) - (b.a).\alpha(c) + \alpha(b).(a.c) = 0.
 \end{align*}
This equation is same as Hom-pre-Lie identity.

 Again, for $n=0$, from the Equation \ref{deform alpha}, we have
 \begin{align*}
 \alpha(a).\alpha(b) - \alpha(a.b)=0.
 \end{align*}
 This shows $\alpha$ is multiplicative. Thus, for $n=0$ case, deformed Hom-pre-Lie algebra is same as the original Hom-pre-Lie algebra $A$.
 
 For $n=1$, from Equation \ref{deform hom-pre-lie 2}, we have
 \begin{align*}
& \alpha(a)\c \nu_1(b,c) - \alpha(b)\c \nu_1(a,c) + \nu_1(b,a)\c \alpha(c) - \nu_1(a, b)\c \alpha(c) - \nu_1(\alpha(b), a.c)- \alpha_1(b) \c (a \c c) \\
 &+ \alpha_1(a) (b\c c)- (a \c b) \c \alpha_1(c) + (b\c a)\c \alpha_n(c) =0.
 \end{align*}
 This is same as $(\tilde{\partial_{\nu\nu}} \nu_1 - \tilde{\partial_{\alpha\nu}}\alpha_1)(a, b, c)=0$.
 
  For $n=1$, from Equation \ref{deform alpha}, we also have
  $$ \alpha(a)\c \alpha_1(b) + \alpha_1(a)\c \alpha(b) - \alpha_1(a\c b) - \alpha(\nu_1(a,b)) + \nu_1(\alpha(a), \alpha(b)).$$
  Again, this is same as $(\widetilde{\partial}_{\nu\alpha}\nu_1 - \widetilde{\partial}_{\alpha\alpha} \alpha_1)(a, b) = 0.$
 \begin{definition}
 The pair $(\nu_1,\alpha_1)$ is called the infinitesimal of the given deformation $(\nu_t, \alpha_t)$. More generally, assume that $(\nu_n, \alpha_n)$ is the first non-zero term after $(\nu_0,\alpha_0)$ of $(\nu_t, \alpha_t)$. Then such  pair $(\nu_n, \alpha_n)$ is called the $n$th infinitesimal of $(\nu_t, \alpha_t)$.
 \end{definition}
 Therefore, from the above discussion we have the following theorem.
 \begin{theorem}\label{infinitesimal thm}
 Let $(\nu_t,\alpha_t)$ be a one-parameter formal deformation of the Hom-pre-Lie algebra $A$. Then the inifinitesimal of a given deformation of $A$ is a $2$-cocycle of the $\alpha$-type cohomology of $A$.
 \end{theorem}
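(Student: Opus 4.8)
The plan is to prove the statement for the general $n$th infinitesimal $(\nu_n,\alpha_n)$, the case $n=1$ being the one computed explicitly just before the statement. Unwinding the definition of $\widetilde{\partial}$, showing that $(\nu_n,\alpha_n)$ is a $2$-cocycle of $\widetilde{C}^\bullet(A,A)$ means exactly that
$$\widetilde{\partial}_{\nu\nu}\nu_n - \widetilde{\partial}_{\alpha\nu}\alpha_n = 0 \qquad\text{and}\qquad \widetilde{\partial}_{\nu\alpha}\nu_n - \widetilde{\partial}_{\alpha\alpha}\alpha_n = 0,$$
so it suffices to extract the coefficient of $t^n$ from the two families of deformation equations and recognize these two identities.

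First I would use Condition \ref{deform cond 3}, equivalently Equation \ref{deform hom-pre-lie 2} for all $n\geq 0$. Separating out those summands in which one of $i,j,k$ equals $n$ and recalling $\nu_0=\nu$, $\alpha_0=\alpha$, these assemble (by the explicit formulas for $\widetilde{\partial}_{\nu\nu}$ and $\widetilde{\partial}_{\alpha\nu}$ recorded above the statement, with $\alpha^{0}=\mathrm{id}$ used when $n=2$) into $(\widetilde{\partial}_{\nu\nu}\nu_n)(a,b,c) - (\widetilde{\partial}_{\alpha\nu}\alpha_n)(a,b,c)$, while the remaining summands go to the other side; this is precisely the rewriting \ref{deform diff equ}. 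Now invoke the hypothesis that $(\nu_n,\alpha_n)$ is the \emph{first} nonzero pair after $(\nu_0,\alpha_0)$: every index in the sums on the right-hand side of \ref{deform diff equ} lies in $\{0,1,\dots,n-1\}$, and there $\nu_i=0$ for $1\leq i\leq n-1$ and $\alpha_k=0$ for $1\leq k\leq n-1$; a nonzero summand would therefore force $i=j=k=0$, contradicting $i+j+k=n\geq 1$. Hence the right-hand side of \ref{deform diff equ} is an empty sum and $\widetilde{\partial}_{\nu\nu}\nu_n - \widetilde{\partial}_{\alpha\nu}\alpha_n = 0$.

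Next I would repeat this with Condition \ref{deform cond 4}, equivalently Equation \ref{deform alpha}: pulling out the terms with $i=n$, $j=n$, or $k=n$ rewrites it as \ref{deform alpha diff equ}, whose left-hand side equals $(\widetilde{\partial}_{\nu\alpha}\nu_n - \widetilde{\partial}_{\alpha\alpha}\alpha_n)(a,b)$ by the formulas for $\widetilde{\partial}_{\nu\alpha}$ and $\widetilde{\partial}_{\alpha\alpha}$, and whose right-hand side is empty by the same minimality argument. Combining the two vanishing identities gives $\widetilde{\partial}(\nu_n,\alpha_n)=(0,0)$, so $(\nu_n,\alpha_n)$ is a $2$-cocycle of $(\widetilde{C}^\bullet(A,A),\widetilde{\partial})$; specializing to $n=1$ yields the statement for the infinitesimal $(\nu_1,\alpha_1)$.

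There is no real obstacle here, since the relevant algebra has already been carried out in the paragraphs preceding the statement; the only care needed is bookkeeping — checking that the $t^n$-coefficient of \ref{deform hom-pre-lie 2} (resp. \ref{deform alpha}) reorganizes term-by-term into the components of $\widetilde{\partial}(\nu_n,\alpha_n)$ dictated by the four coboundary formulas of Section \ref{sec 2}, keeping track of signs and of the $\alpha^{n-1}$ versus $\alpha^{n-2}$ conventions, and making sure the ``empty sum'' claim for the two right-hand sides is genuinely deduced from minimality rather than merely asserted.
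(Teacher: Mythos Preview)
Your proposal is correct and follows essentially the same approach as the paper: the paper's ``proof'' is precisely the discussion preceding the theorem, where the $t^n$-coefficients of Conditions~\ref{deform cond 3} and~\ref{deform cond 4} are reorganized into Equations~\ref{deform diff equ} and~\ref{deform alpha diff equ}, and the $n=1$ case is written out explicitly. Your version is slightly more complete in that you treat the general $n$th infinitesimal directly and spell out the minimality argument making the right-hand sides vanish, which the paper leaves implicit but later relies on (e.g.\ in the proof of the rigidity theorem).
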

 
 \subsection{Obstructions of deformation}
 In this section, we show how $\alpha$-type cohomology control obstructions of given deformations.
 \begin{definition}
 \emph{
 A deformation of order $n$ of $A$ is given by a $\mathbb{K}[[t]]$-bilinear map $\nu_t : A[[t]]\times A[[t]] \to A[[t]]$ and a $\mathbb{K}[[t]]$-linear map $\alpha_t : A[[t]] \to A[[t]]$ of the forms
 $$\nu_t = \sum^{n}_{i=0} \nu_i t^i,~~~\alpha_t = \sum^n_{i=0} \alpha_i t^i,$$
 such that the pair $(\nu_t, \alpha_t)$ satisfies the definition of the formal deformation of $A$ (mod $t^{n+1}$).
 }
 \end{definition}
 We say a deformation $(\nu_t, \alpha_t)$ of order $n$ is extendable to a deformation of order $(n+1)$ if there exists $\nu_{n+1} \in \widetilde{C_\nu}^{n+1}(A,A)$ and $\alpha_{n+1} \in \widetilde{C_\alpha}^{n+1}(A,A)$ such that 
 $$\bar{\nu_t} = \nu_t +\nu_{n+1}t^{n+1},~~~\bar{\alpha_t} = \alpha_t +\alpha_{n+1}t^{n+1},$$
 and the pair $(\bar{\nu_t},\bar{\alpha_t})$ satisfies the Definition \ref{formal deform defn} mod($t^{n+2}$).
  From a deformation $(\nu_t, \alpha_t)$ of order $(n+1)$, we have the following equations:
   \begin{align*}
& \sum_{\substack{i+j+k = n+1\\i,j,k\geq 0}} \nu_i(\nu_j(a,b), \alpha_k(c)) - \nu_i (\alpha_j(a), \nu_k(b,c)) - \nu_i(\nu_j(b,a), \alpha_k(c)) + \nu_i (\alpha_j(b), \nu_k(a,c)) = 0,\\
& \sum_{\substack{i+j+k = n+1\\i,j,k\geq 0}} \nu_i (\alpha_j(a), \alpha_k(b)) - \sum_{\substack{i+j=n+1\\i,j\geq 0}} \alpha_i (\nu_j(a,b))=0.
 \end{align*}
 We can re-write the above equations as follows:
 \begin{align*}
& (\widetilde{\partial}_{\nu\nu} \nu_n - \widetilde{\partial}_{\alpha\nu}\alpha_n)(a, b, c) =  \sum_{\substack{i+j+k=n+1\\0\leq  i,j,k\leq n}} ( \nu_i \circ_{\alpha_j} \nu_k) (a, b, c) -   \sum_{\substack{i+j+k=n+1\\ 0\leq i,j,k\leq n}} (\nu_i \circ_{\alpha_j} \nu_k) (b, a, c),\\ 
& (\widetilde{\partial}_{\nu\alpha}\nu_n - \widetilde{\partial}_{\alpha\alpha} \alpha_n)(a, b) = \sum_{\substack{i+j+k = n+1\\0\leq i,j,k\leq n}} \nu_i (\alpha_j(a), \alpha_k(b)) - \sum_{\substack{i+j=n+1\\ 0\leq i,j\leq n}} \alpha_i (\nu_j(a,b)).
 \end{align*}
 We define the $n$th obstruction to extend a deformation of order $n$ to $n+1$ as $O^n(A):= (O^n_\nu (A), O^n_\alpha (A))$, where
 \begin{align}
& O^n_\nu (A)(a,b,c)= \sum_{\substack{i+j+k=n+1\\0\leq  i,j,k\leq n}} ( \nu_i \circ_{\alpha_j} \nu_k) (a, b, c) -   \sum_{\substack{i+j+k=n+1\\ 0\leq i,j,k\leq n}} (\nu_i \circ_{\alpha_j} \nu_k) (b, a, c),\\
& O^n_\alpha (A)(a,b,c)= \sum_{\substack{i+j+k = n+1\\0\leq i,j,k\leq n}} \nu_i (\alpha_j(a), \alpha_k(b)) - \sum_{\substack{i+j=n+1\\ 0\leq i,j\leq n}} \alpha_i (\nu_j(a,b)).
 \end{align}
 Note that $O^n(A) \in \widetilde{C}^3(A,A)$.
 \begin{theorem}
 A deformation $(\nu_t, \alpha_t)$ of order $n$ is extendable to a deformation of order $n+1$ if and only if the cohomology class of $O^n(A)$ vanishes.
 \end{theorem}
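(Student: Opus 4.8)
The plan is to recast ``extendable'' as a cohomological condition. Adding the degree-$(n+1)$ terms $\nu_{n+1}t^{n+1}$ and $\alpha_{n+1}t^{n+1}$ leaves untouched every defining equation of Definition \ref{formal deform defn} of order $\le n$, so $(\bar{\nu_t},\bar{\alpha_t})$ is a deformation of order $n+1$ if and only if the pair $(\nu_{n+1},\alpha_{n+1})\in\widetilde{C_\nu}^{2}(A,A)\oplus\widetilde{C_\alpha}^{2}(A,A)=\widetilde{C}^{2}(A,A)$ solves the two order-$(n+1)$ equations. As already rewritten above, these equations read $\widetilde{\partial}_{\nu\nu}\nu_{n+1}-\widetilde{\partial}_{\alpha\nu}\alpha_{n+1}=O^n_\nu(A)$ and $\widetilde{\partial}_{\nu\alpha}\nu_{n+1}-\widetilde{\partial}_{\alpha\alpha}\alpha_{n+1}=O^n_\alpha(A)$; that is, $\widetilde{\partial}(\nu_{n+1},\alpha_{n+1})=O^n(A)$. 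Hence the order-$n$ deformation extends to order $n+1$ if and only if $O^n(A)$ lies in the image of $\widetilde{\partial}\colon\widetilde{C}^{2}(A,A)\to\widetilde{C}^{3}(A,A)$.

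The next step is to verify that $O^n(A)$ is a $3$-cocycle, so that its class in $\widetilde{H}^{3}(A,A)$ is meaningful. For this I would expand $\widetilde{\partial}\,O^n(A)=\bigl(\widetilde{\partial}_{\nu\nu}O^n_\nu-\widetilde{\partial}_{\alpha\nu}O^n_\alpha,\ \widetilde{\partial}_{\nu\alpha}O^n_\nu-\widetilde{\partial}_{\alpha\alpha}O^n_\alpha\bigr)$ using the definitions of $O^n_\nu$ and $O^n_\alpha$ as sums of $\alpha_j$-associators and multiplicativity defects, and then substitute the lower-order deformation equations \ref{deform hom-pre-lie 2} and \ref{deform alpha}, which hold for every $m\le n$. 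Regrouping the resulting multiple sums according to which pair among the $\nu_i,\nu_k$ (respectively $\nu_i,\alpha_j$) is active, and invoking the Hom-pre-Lie identity, the Hom-Jacobi identity of the sub-adjacent Hom-Lie algebra, and the multiplicativity of each $\alpha_j$, the terms telescope and cancel; the combinatorics is exactly the one used in the verification that $\widetilde{\partial}^{2}=0$, with each structure constant now carrying a deformation index. This verification is routine but very lengthy, and I would only indicate the cancellation pattern rather than write out every term; this bookkeeping is the only genuine obstacle in the proof.

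With these two facts in hand I would assemble the equivalence. If $(\nu_t,\alpha_t)$ extends to order $n+1$, then by the first step $O^n(A)=\widetilde{\partial}(\nu_{n+1},\alpha_{n+1})$ for some $2$-cochain, so the cohomology class $[O^n(A)]$ vanishes. Conversely, if $[O^n(A)]=0$ then, since $O^n(A)$ is a cocycle, it is a coboundary: $O^n(A)=\widetilde{\partial}(\nu_{n+1},\alpha_{n+1})$ for some $(\nu_{n+1},\alpha_{n+1})\in\widetilde{C}^{2}(A,A)$. Setting $\bar{\nu_t}=\nu_t+\nu_{n+1}t^{n+1}$ and $\bar{\alpha_t}=\alpha_t+\alpha_{n+1}t^{n+1}$, all the equations of Definition \ref{formal deform defn} hold modulo $t^{n+2}$: those of order $\le n$ because they are unaffected, and the order-$(n+1)$ pair because it is precisely $\widetilde{\partial}(\nu_{n+1},\alpha_{n+1})=O^n(A)$. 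Thus $(\nu_t,\alpha_t)$ is extendable to order $n+1$, which completes the proof.
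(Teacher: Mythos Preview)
Your proof is correct and follows essentially the same route as the paper: both reduce extendability to the equation $\widetilde{\partial}(\nu_{n+1},\alpha_{n+1})=O^n(A)$ and read off the equivalence from there. The one point where you go beyond the paper is your second paragraph, where you outline the verification that $O^n(A)$ is always a $3$-cocycle; the paper's proof never establishes this independently (it only observes, in the forward direction, that $O^n(A)$ is a coboundary and hence a cocycle), so strictly speaking the phrase ``cohomology class of $O^n(A)$'' is left unjustified there. Your inclusion of this step, even in sketched form, makes the argument more complete.
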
 
 \begin{proof}
 Suppose a deformation $(\nu_t, \alpha_t)$ of order $n$ extends a deformation of order $n+1$. From the definition of obstruction, we have
 $$O^n(A):= (O^n_\nu (A), O^n_\alpha (A)) = \partial (\nu_{n+1}, \alpha_{n+1}).$$
 Thus, $\partial (O^n(A))= 0$, that is, the cohomology class of the obstruction vanishes.
 
Conversely, suppose the cohomology class of $O^n(A)$ vanishes, that is,
$$O^n(A)= \partial(\nu_{n+1}, \alpha_{n+1}),$$ 
for some $2$-cochain $(\nu_{n+1}, \alpha_{n+1})$. We define an $(n+1)$-deformation extending an $n$-deformation $(\nu_t, \alpha_t)$ as follows:
\begin{align*}
&\bar{\nu_t} = \nu_t + \nu_{n+1}t^{n+1},\\
&\bar{\alpha_t} = \alpha_t +\alpha_{n+1}t^{n+1}.
\end{align*}
It is easy to check that $(\bar{\nu_t}, \bar{\alpha_t})$ defines an $(n+1)$-deformation.
 \end{proof}
 
 \begin{cor}
 The obstruction of extending an infinitesimal deformation lies in $\widetilde{H^3}(A, A)$.
 \end{cor}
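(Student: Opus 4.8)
The plan is to show that the obstruction $O^1(A) = (O^1_\nu(A), O^1_\alpha(A))$ attached to an infinitesimal deformation $(\nu_t,\alpha_t) = (\nu_0 + \nu_1 t,\ \alpha_0 + \alpha_1 t)$ is a $3$-cocycle of the $\alpha$-type complex $(\widetilde{C}^\bullet(A,A),\widetilde{\partial})$, so that its class in $\widetilde{H^3}(A,A)$ is well defined (and, by the preceding theorem, vanishes precisely when the infinitesimal extends to a second-order deformation). By Theorem~\ref{infinitesimal thm} I may use that $(\nu_1,\alpha_1)$ is a $2$-cocycle, i.e.\ $\widetilde{\partial}_{\nu\nu}\nu_1 = \widetilde{\partial}_{\alpha\nu}\alpha_1$ and $\widetilde{\partial}_{\nu\alpha}\nu_1 = \widetilde{\partial}_{\alpha\alpha}\alpha_1$; apart from this, the only structural inputs allowed are the order-$0$ relations, namely the Hom-pre-Lie identity for $\nu_0$ and the multiplicativity of $\alpha_0 = \alpha$, together with the Hom-Jacobi identity for $[-,-]_C$ that they imply.

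First I would record, from the defining formulas for $O^n_\nu(A)$ and $O^n_\alpha(A)$, that when $n=1$ the index sets $\{i+j+k=2,\ 0\le i,j,k\le 1\}$ and $\{i+j=2,\ 0\le i,j\le 1\}$ are small: $O^1_\nu(A)$ is the $t^2$-coefficient of Equation~\ref{deform hom-pre-lie 2} and $O^1_\alpha(A)$ the $t^2$-coefficient of Equation~\ref{deform alpha}, each a sum of finitely many terms bilinear in the pair $(\nu_1,\alpha_1)$ (one $\nu_1$-against-$\nu_1$ associator term, together with the mixed $\nu_1$-against-$\alpha_1$ terms); in particular $O^1(A)\in\widetilde{C}^3(A,A)$. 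The core of the proof is then the identity $\widetilde{\partial}\,O^1(A)=0$, that is
\[
\widetilde{\partial}_{\nu\nu}O^1_\nu(A)-\widetilde{\partial}_{\alpha\nu}O^1_\alpha(A)=0
\qquad\text{and}\qquad
\widetilde{\partial}_{\nu\alpha}O^1_\nu(A)-\widetilde{\partial}_{\alpha\alpha}O^1_\alpha(A)=0 .
\]
I would expand the four composite operators using the explicit cochain formulas of Section~\ref{sec 2}, rewriting $O^1_\nu(A)$ in the $\alpha_j$-associator form of Equations~\ref{deform diff equ} and~\ref{deform alpha diff equ}. Collecting the resulting terms, the cancellations are of three kinds, directly parallel to the proof of the theorem of Section~\ref{sec 2} that $\widetilde{\partial}^2=0$: purely combinatorial sign cancellations among the $\rho$- and $\mu$-contributions; cancellations coming from the Hom-pre-Lie identity for $\nu_0$ (and the derived identities it forces on the $\nu_1$-associators); and cancellations coming from the Hom-Jacobi identity for $[-,-]_C$. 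The genuinely new feature, absent from the $\widetilde{\partial}^2=0$ computation, is that the combinations $\widetilde{\partial}_{\nu\nu}\nu_1-\widetilde{\partial}_{\alpha\nu}\alpha_1$ and $\widetilde{\partial}_{\nu\alpha}\nu_1-\widetilde{\partial}_{\alpha\alpha}\alpha_1$ surface after regrouping, and the $2$-cocycle condition on $(\nu_1,\alpha_1)$ is exactly what kills them.

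The hard part will be purely the bookkeeping: $O^1(A)$ is a degree-$3$ cochain assembled from two copies of degree-$2$ data, so applying the already lengthy operator $\widetilde{\partial}$ to it unfolds into a large family of terms whose pairwise matching must be organised carefully --- the same phenomenon the authors met for $\widetilde{\partial}^2=0$, where only one of four components is written out in full. Accordingly I would present the argument by exhibiting the grouping of terms and naming where the Hom-pre-Lie identity, the Hom-Jacobi identity, and the cocycle relations $\widetilde{\partial}_{\nu\nu}\nu_1=\widetilde{\partial}_{\alpha\nu}\alpha_1$, $\widetilde{\partial}_{\nu\alpha}\nu_1=\widetilde{\partial}_{\alpha\alpha}\alpha_1$ enter, rather than printing every term. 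I would also note explicitly that although the preceding theorem gives $O^1(A)=\widetilde{\partial}(\nu_2,\alpha_2)$ \emph{when} the deformation extends, the corollary asks for $\widetilde{\partial}O^1(A)=0$ with no extendability hypothesis, which is why this direct verification is required and an appeal to the preceding theorem does not suffice.
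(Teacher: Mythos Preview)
Your proposal is correct, and in fact it does strictly more than the paper: the paper gives no proof of this corollary at all, treating it as an immediate consequence of the preceding theorem and the observation that $O^n(A)\in\widetilde{C}^3(A,A)$. You are right to point out that the preceding theorem only says $O^1(A)$ is a coboundary \emph{when the deformation extends}, so strictly speaking it does not by itself show that $O^1(A)$ is a $3$-cocycle in general; that closedness is precisely what your plan supplies. The ingredients you isolate --- the $2$-cocycle relations $\widetilde{\partial}_{\nu\nu}\nu_1=\widetilde{\partial}_{\alpha\nu}\alpha_1$ and $\widetilde{\partial}_{\nu\alpha}\nu_1=\widetilde{\partial}_{\alpha\alpha}\alpha_1$, the Hom-pre-Lie identity for $\nu_0$, multiplicativity of $\alpha_0$, and the Hom-Jacobi identity for $[-,-]_C$ --- are exactly what the standard Gerstenhaber-type verification needs, and the parallel you draw with the bookkeeping in the $\widetilde{\partial}^2=0$ computation is apt. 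So your route is the honest one; the paper simply omits this step.
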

 
 \subsection{Equivalent deformation and cohomology}
 Let $(\nu_t,\alpha_t)$ and $(\nu^{'}_t,\alpha^{'}_t)$ be two formal deformations of $A$, where 
 $\nu^{'}_t = \sum_{i\geq 0} \nu^{'}_i t^i,~~~\alpha^{'}_t = \sum_{i\geq 0} \alpha^{'}_i t^i$.
 \begin{definition}
 Two deformations $(\nu_t,\alpha_t)$ and $(\nu^{'}_t,\alpha^{'}_t)$  are said to be equivalent if there exists $\mathbb{K}[[t]]$-isomorphism $\Psi_t : A[[t]] \to A[[t]]$ of the form $\Psi_t = \sum_{i\geq 0} \psi_i t^i$ such that $\psi_0= \text{id}$, and $\psi_i : A \to A$ are $\mathbb{K}$-linear maps for all $i\geq 0$, satisfying the following conditions:
 \begin{align}\label{cond equiv defor}
 \Psi_t \circ \nu^{'}_t = \nu_t \circ (\Psi_t \otimes \Psi_t),~\text{and}~ \alpha_t\circ \Psi_t = \Psi_t\circ \alpha^{'}_t.
 \end{align}
 \end{definition}
 \begin{definition}
 A deformation $(\nu_t,\alpha_t)$ of $A$ is called trivial if $(\nu_t,\alpha_t)$ is equivalent to $(\nu_0,\alpha_0)$. A Hom-pre-Lie algebra $A$ is called rigid if it has only trivial deformation upto equivalence.
 \end{definition}
 
The Condition \ref{cond equiv defor} is equivalent to the following equations: 
\begin{align}
 &\sum_{i\geq 0}\psi_i\bigg( \sum_{j\geq 0}\nu\rq_j(a,b)t^j \bigg)t^i=\sum_{i\geq 0}\nu_i\bigg( \sum_{j\geq 0}\psi_j(a)t^j,\sum_{k\geq 0}\psi_k(b)t^k \bigg)t^i,\\
 &\sum_{i\geq 0}\alpha_i\bigg( \sum_{j\geq 0}\psi_j(a)t^j \bigg)t^i=\sum_{i\geq 0}\psi_i\bigg( \sum_{j\geq 0}\alpha\rq_j(a)t^j \bigg)t^i.
 \end{align}
 This is same as the following equations:
 \begin{align}
 \label{equivalent 10}&\sum_{i,j\geq 0}\psi_i(\nu\rq_j(a,b))t^{i+j}=\sum_{i,j,k\geq 0}\nu_i(\psi_j(a),\psi_k(b))t^{i+j+k},\\
 &\label{equivalent 101}\sum_{i,j \geq 0}\alpha_i(\psi_j(a))t^{i+j}=\sum_{i,j\geq 0}\psi_i(\alpha\rq_j(a))t^{i+j}.
 \end{align}
 Comparing constant terms on both sides of the above equations, we have
 \begin{align*}
 &\nu\rq_0(a,b)=\nu_0(a,b)=a\c b,\,\,\,\text{as}\,\,\,\psi_0=Id,\\
 &\alpha_0(a)=\alpha\rq_0(a)=\alpha(a).
  \end{align*}
  Now comparing coefficients of $t$, we have
  \begin{align}\label{equivalent main}
&\nu\rq_1(a,b)+\psi_1(\nu\rq_0(a,b))=\nu_1(a,b)+\nu_0(\psi_1(a),b)+\nu_0(a,\psi_1(b)),\\
\label{equivalent main 1}&\alpha_1(a)+\alpha_0(\psi_1(a))=\alpha\rq_1(a)+\psi_1(\alpha\rq_0(a)).
  \end{align}
 Equations (\ref{equivalent main}) and (\ref{equivalent main 1}) are same as
  \begin{align*}
& \nu\rq_1(a,b)-\nu_1(a,b)=a\c \psi_1(b)+\psi_1(a)\c b-\psi_1(a\c b)=\widetilde{\partial}_{\nu \nu} \psi_1(a,b).\\
& \alpha_1\rq (a) - \alpha_1(a) = \alpha(\psi_1(a)) - \psi_1(\alpha(a)) = \widetilde{\partial}_{\nu \alpha} \psi_1(a).
\end{align*}
Thus, we have the following proposition.
 \begin{proposition}
 Two equivalent formal deformations of $A$ have cohomologous infinitesimals.
 \end{proposition}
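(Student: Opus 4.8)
The plan is to unwind the definition of equivalence and to extract, from the two functional equations that the equivalence $\Psi_t$ must satisfy, precisely the relations that express the difference of the infinitesimals as a coboundary in the $\alpha$-type complex. Concretely, let $(\nu_t,\alpha_t)$ and $(\nu'_t,\alpha'_t)$ be equivalent formal deformations of $A$, witnessed by $\Psi_t=\sum_{i\geq 0}\psi_i t^i$ with $\psi_0=\mathrm{id}$. The two conditions in \eqref{cond equiv defor} are equivalent to the power-series identities \eqref{equivalent 10} and \eqref{equivalent 101}, which I would write out and then read off coefficient by coefficient. The degree-zero coefficients recover $\nu'_0=\nu_0$ and $\alpha'_0=\alpha_0$, so both are genuine deformations of the same Hom-pre-Lie algebra, and the degree-one coefficients give exactly \eqref{equivalent main} and \eqref{equivalent main 1}.

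Next I would rearrange \eqref{equivalent main} and \eqref{equivalent main 1}. Using $\psi_0=\mathrm{id}$, $\nu_0(a,b)=a\c b$ and $\alpha_0=\alpha$, equation \eqref{equivalent main} becomes
\[
\nu'_1(a,b)-\nu_1(a,b)=a\c\psi_1(b)+\psi_1(a)\c b-\psi_1(a\c b),
\]
and the right-hand side is precisely $(\widetilde{\partial}_{\nu\nu}\psi_1)(a,b)$ by the $n=1$ case of the formula defining $\widetilde{\partial}_{\nu\nu}$ on $\widetilde{C}^1_\nu(A,A)$ (recall $\rho=L$, $\mu=R$ for the regular representation, so $\rho(a)\psi_1(b)=a\c\psi_1(b)$, $\mu(b)\psi_1(a)=\psi_1(a)\c b$, and the last term is $-\psi_1(a\c b)$). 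Similarly \eqref{equivalent main 1} becomes
\[
\alpha'_1(a)-\alpha_1(a)=\alpha(\psi_1(a))-\psi_1(\alpha(a))=(\widetilde{\partial}_{\nu\alpha}\psi_1)(a).
\]
Putting these together,
\[
(\nu'_1,\alpha'_1)-(\nu_1,\alpha_1)=(\widetilde{\partial}_{\nu\nu}\psi_1,\;\widetilde{\partial}_{\nu\alpha}\psi_1)=\widetilde{\partial}(\psi_1,0),
\]
where the last equality uses the definition of $\widetilde{\partial}$ together with $\widetilde{\partial}_{\alpha\nu}(0)=0$ and $\widetilde{\partial}_{\alpha\alpha}(0)=0$. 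Since $(\psi_1,0)\in\widetilde{C}^1(A,A)$, this exhibits the two infinitesimals $(\nu_1,\alpha_1)$ and $(\nu'_1,\alpha'_1)$, which are $2$-cocycles by Theorem \ref{infinitesimal thm}, as cohomologous in $\widetilde{H}^2(A,A)$.

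There is essentially no hard step here: the whole argument is bookkeeping with the two defining power-series identities for an equivalence, followed by recognizing the degree-one terms as the two components of a $1$-coboundary. The only point requiring a little care is the identification of the right-hand sides with $\widetilde{\partial}_{\nu\nu}\psi_1$ and $\widetilde{\partial}_{\nu\alpha}\psi_1$ — one must check the $n=1$ instances of those formulas against the regular representation and verify the sign conventions so that no stray sign appears — and the observation that, because the $\widetilde{C}_\alpha$-component of $(\psi_1,0)$ vanishes, the $\widetilde{\partial}_{\alpha\nu}$ and $\widetilde{\partial}_{\alpha\alpha}$ contributions in $\widetilde{\partial}(\psi_1,0)$ drop out, so that $\widetilde{\partial}(\psi_1,0)$ really does equal $(\widetilde{\partial}_{\nu\nu}\psi_1,\widetilde{\partial}_{\nu\alpha}\psi_1)$. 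With that in place the proposition follows immediately.
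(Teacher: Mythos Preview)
Your proof is correct and follows essentially the same approach as the paper: the paper derives the coefficient-of-$t$ relations \eqref{equivalent main} and \eqref{equivalent main 1} from the equivalence conditions and rewrites them as $\nu'_1-\nu_1=\widetilde{\partial}_{\nu\nu}\psi_1$ and $\alpha'_1-\alpha_1=\widetilde{\partial}_{\nu\alpha}\psi_1$, then states the proposition. You add only the (helpful) explicit packaging of these two identities as $(\nu'_1,\alpha'_1)-(\nu_1,\alpha_1)=\widetilde{\partial}(\psi_1,0)$, using that the $\alpha$-component of a $1$-cochain is trivial, which makes the ``cohomologous'' claim fully precise.
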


 \begin{theorem}
 Let $(\nu_t, \alpha_t)$ be a non-trivial formal deformation of the Hom-pre-Lie algebra $A$. Then $(\nu_t, \alpha_t)$ is equivalent to a deformation whose infinitesimal is not a coboundary.
 \end{theorem}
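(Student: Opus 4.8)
The plan is to mimic the classical Gerstenhaber argument for associative algebras, adapted to the $\alpha$-type cohomology developed above. Suppose $(\nu_t,\alpha_t)$ is a non-trivial formal deformation. If its infinitesimal is already not a coboundary, there is nothing to do. So assume the infinitesimal $(\nu_1,\alpha_1)$ is a coboundary, say $(\nu_1,\alpha_1)=\widetilde{\partial}(\psi_1,0)=(\widetilde{\partial}_{\nu\nu}\psi_1,\widetilde{\partial}_{\nu\alpha}\psi_1)$ for some $\psi_1\in\widetilde{C_\nu}^1(A,A)$; this is exactly the form of a $1$-coboundary in the total complex, since $\widetilde{C_\alpha}^1(A,A)=\{0\}$. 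The idea is to use $\Psi_t=\mathrm{id}+\psi_1 t$ to produce an equivalent deformation in which the coefficient of $t$ vanishes, thereby pushing the first nonzero term to a higher power of $t$; iterating, either the process terminates with an infinitesimal that is not a coboundary, or the deformation becomes trivial, contradicting non-triviality.

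Concretely, first I would set $\Psi_t=\mathrm{id}+\psi_1 t$, which is a $\mathbb{K}[[t]]$-linear automorphism of $A[[t]]$ with inverse $\mathrm{id}-\psi_1 t+\psi_1^2 t^2-\cdots$, and define the transported deformation $(\bar\nu_t,\bar\alpha_t)$ by
\[
\bar\nu_t = \Psi_t^{-1}\circ \nu_t\circ(\Psi_t\otimes\Psi_t),\qquad \bar\alpha_t = \Psi_t^{-1}\circ\alpha_t\circ\Psi_t.
\]
By construction $(\bar\nu_t,\bar\alpha_t)$ is equivalent to $(\nu_t,\alpha_t)$ and hence is again a one-parameter formal deformation of $A$ (the deformation axioms are preserved under equivalence, as the defining conditions in Definition \ref{formal deform defn} are conjugation-stable). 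Next I would expand both sides in powers of $t$ and read off the coefficient of $t$: exactly as in the computation preceding Equation (\ref{equivalent main}), one gets
\[
\bar\nu_1(a,b) = \nu_1(a,b) - \widetilde{\partial}_{\nu\nu}\psi_1(a,b),\qquad \bar\alpha_1(a) = \alpha_1(a) - \widetilde{\partial}_{\nu\alpha}\psi_1(a),
\]
so the choice $(\nu_1,\alpha_1)=\widetilde{\partial}(\psi_1,0)$ forces $\bar\nu_1=0$ and $\bar\alpha_1=0$. Thus $(\bar\nu_t,\bar\alpha_t)=(\nu_0+\sum_{i\ge 2}\bar\nu_i t^i,\ \alpha_0+\sum_{i\ge 2}\bar\alpha_i t^i)$, i.e.\ the infinitesimal has been killed and the first potentially nonzero correction now sits at order $2$. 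By Theorem \ref{infinitesimal thm} applied to $(\bar\nu_t,\bar\alpha_t)$ truncated appropriately, the pair $(\bar\nu_2,\bar\alpha_2)$ — the $2$nd infinitesimal in the sense of the definition above — is a $2$-cocycle of the $\alpha$-type cohomology.

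Now I would iterate. If $(\bar\nu_2,\bar\alpha_2)$ is not a coboundary, we are done; if it is a coboundary, $(\bar\nu_2,\bar\alpha_2)=\widetilde{\partial}(\psi_2,0)$, apply the same trick with $\Psi_t=\mathrm{id}+\psi_2 t^2$ to remove the order-$2$ term, and so on. Two outcomes are possible: either at some finite stage the $n$th infinitesimal is a cocycle that is not a coboundary — giving an equivalent deformation with the desired property — or every stage produces a coboundary and the process can be carried out for all $n$. In the latter case the infinite composition $\Psi_t=\cdots(\mathrm{id}+\psi_3 t^3)(\mathrm{id}+\psi_2 t^2)(\mathrm{id}+\psi_1 t)$ is a well-defined element of $\mathrm{id}+t\,\mathrm{End}(A)[[t]]$ (it converges $t$-adically because $\psi_n$ enters at order $t^n$), and it conjugates $(\nu_t,\alpha_t)$ to $(\nu_0,\alpha_0)$, making the deformation trivial — contradicting the hypothesis. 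Hence the process terminates, proving the theorem. The main obstacle I anticipate is the bookkeeping needed to justify the inductive step cleanly: one must check that after each equivalence the object is still a genuine formal deformation and that the "$n$th infinitesimal" of the transported deformation is governed by the $\widetilde{\partial}$-coboundary of $\psi_n$ modulo terms built from the already-vanished lower-order pieces, which requires a short argument that those lower-order corrections contribute nothing to the coefficient of $t^n$ once they have been set to zero. Given Theorem \ref{infinitesimal thm} and the coboundary formulas derived above, this is routine but must be stated carefully.
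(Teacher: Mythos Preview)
Your proposal is correct and follows essentially the same approach as the paper: assume the $n$th infinitesimal is a coboundary $\widetilde{\partial}(\phi_n,0)$, use the formal isomorphism $\Psi_t=\mathrm{id}+\phi_n t^n$ to pass to an equivalent deformation with vanishing $t^n$-term, and iterate until either a non-coboundary infinitesimal appears or the deformation is shown to be trivial. The paper's write-up is terser (it does not spell out the $t$-adic convergence of the infinite composition that you mention), but the argument is the same.
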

\begin{proof}
 Let $(\nu_t,\alpha_t)$ be a deformation of the Hom-pre-Lie algebra $A$ and $(\nu_n, \alpha_n)$ be the $n$-infinitesimal of the deformation for some $n\geq 1$. Then by Theorem (\ref{infinitesimal thm}), $(\nu_n, \alpha_n)$ is a $2$-cocycle, that is, $\widetilde{\partial} (\nu_n, \alpha_n)=0$. Suppose $\nu_n=-\widetilde{\partial}_{\nu \nu}\phi_n$ and $\alpha_n = -\widetilde{\partial}_{\nu \alpha}\phi_n$ for some $\phi_n\in \widetilde{C_\nu}^1(A, A)$, that is, $(\nu_n, \alpha_n)$ is a coboundary. We define a formal isomorphism $\Psi_t$ of $A[[t]]$ as follows:
  $$\Psi_t(a)=a+\phi_n(a)t^n.$$
  We set
  $$\bar{\nu_t}=\Psi^{-1}_t\circ \nu_t\circ (\Psi_t\otimes\Psi_t) \,\,\,\text{and}\,\,\,\bar{\alpha_t}=\Psi^{-1}_t\circ\alpha_t\circ\Psi_t.$$
  Thus, we have a new deformation $(\bar{\nu_t}, \bar{\alpha_t})$ which is equivalent to $(\nu_t, \alpha_t)$. By expanding the above equations and comparing coefficients of $t^n$, we get
  $$\bar{\nu_n}-\nu_n=\widetilde{\partial}_{\nu \nu} \phi_n,~~~\bar{\alpha_n} - \alpha = \widetilde{\partial}_{\nu \alpha}\phi_n.$$
  Hence, $\bar{\nu_n}=0, \bar{\alpha_n}=0$. By repeating this argument, we can kill off any infinitesimal which is a coboundary. Thus, the process must stop if the deformation is non-trivial. 
  \end{proof}
 
  \begin{cor}
 If $\widetilde{H^2}(A, A)=0$ then every deformation of $A$ is equivalent to a trivial deformation, that is, $A$ is rigid.
 \end{cor}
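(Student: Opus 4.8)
The plan is to derive rigidity directly from the theorem just proven (every non-trivial formal deformation is equivalent to one whose infinitesimal is not a coboundary), combined with the hypothesis $\widetilde{H^2}(A,A)=0$. First I would let $(\nu_t,\alpha_t)$ be an arbitrary formal deformation of $A$ and argue by contradiction: suppose it is non-trivial. By the preceding theorem, $(\nu_t,\alpha_t)$ is then equivalent to a formal deformation $(\bar\nu_t,\bar\alpha_t)$ whose $n$th infinitesimal $(\bar\nu_n,\bar\alpha_n)$ (for the smallest $n\geq 1$ with a nonzero term) is not a coboundary in the $\alpha$-type complex.

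Next I would invoke Theorem \ref{infinitesimal thm}: the infinitesimal $(\bar\nu_n,\bar\alpha_n)$ is a $2$-cocycle of the $\alpha$-type cohomology, i.e. $\widetilde{\partial}(\bar\nu_n,\bar\alpha_n)=0$, so it represents a class in $\widetilde{H^2}(A,A)$. Since $\widetilde{H^2}(A,A)=0$ by hypothesis, every $2$-cocycle is a coboundary; in particular $(\bar\nu_n,\bar\alpha_n)$ is a coboundary. This contradicts the conclusion of the previous theorem that the infinitesimal is not a coboundary. Hence no non-trivial deformation can exist, so every formal deformation of $A$ is equivalent to the trivial deformation $(\nu_0,\alpha_0)$, which is exactly the statement that $A$ is rigid.

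The only subtlety worth spelling out is that the argument must be applied to the \emph{first} nonzero infinitesimal: starting from $(\nu_t,\alpha_t)$, one applies the killing construction from the previous theorem's proof to the lowest-order term, obtains an equivalent deformation whose lowest nonzero term has strictly higher order, and this process terminates (producing the trivial deformation) precisely because at each stage the relevant infinitesimal, being a $2$-cocycle in a complex with vanishing $H^2$, is forced to be a coboundary and hence can be killed. So in fact the cleanest write-up is constructive rather than by contradiction: inductively remove the term of lowest positive degree using Theorem \ref{infinitesimal thm} and the hypothesis $\widetilde{H^2}(A,A)=0$, invoking the previous theorem's isomorphism $\Psi_t(a)=a+\phi_n(a)t^n$ at each step.

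I do not expect any genuine obstacle here since this is the standard homological consequence of the deformation machinery already built; the one place to be careful is bookkeeping the order of the surviving infinitesimal across successive equivalences, and noting that the composite of the countably many isomorphisms $\Psi_t$ is well-defined as a formal power series (each contributes terms only in degrees $\geq n$, so the infinite composition converges $t$-adically). This is routine and can be stated in a sentence. Thus the corollary follows immediately.
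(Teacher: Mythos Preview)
Your proposal is correct and matches the paper's approach: the corollary is stated without proof in the paper because it is an immediate consequence of the preceding theorem, and your argument by contradiction (or equivalently, the iterative killing you describe) is exactly the intended deduction. The only remark is that the convergence point about the infinite composite of $\Psi_t$'s is not addressed in the paper at all, so you are being slightly more careful than necessary for this context.
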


 \section{Equivariant cohomology of Hom-pre-Lie algebras}\label{equiv coho}\label{sec 4}
 In this section, we develop an equivariant cohomology for Hom-pre-Lie algebras equipped with a group action. To construct such an equivariant cohomology, we will follow Bredon's approach \cite{bredon67} for equivarint cohomology of topological $G$-spaces.
 
 \begin{definition}
 \emph{
 Let $G$ be a finite group and $A$ be a Hom-pre-Lie algebra. The group $G$ is said to act on $A$ from left if there exists a map
 $$\phi: G\times A \to A,~~~(g, a)\mapsto ga,$$
 satisfying
 \begin{enumerate}
 \item For each $g\in G$, the map $\phi(g,-)=\psi_g : A\to A,~a\mapsto ga$ is linear.
 \item For all $a\in A,~ea=a$, where $e$ denotes the identity element of $G$.
 \item $(g_1g_2)a= g_1(g_2 a)$ for all $g_1, g_2\in G$ and $a\in A$.
 \item $g(a \c b)=ga \c gb$ and $\alpha(g\c a)=g\alpha(a)$, for all $g\in G, a,b\in A.$
 \end{enumerate}
 }
 \end{definition}
 
 \begin{remark}
We defined finite group actions on Hom-pre-Lie algebras  for simplicity, but one can do it for any discrete groups.
 \end{remark}
 
Let $G$ is acting on the Hom-pre-Lie algebra $A$. Let $H$ be a subgroup of $G$. We define the $H$-fixed point set of $A$ as follows:
 $$A^H = \lbrace a\in A \mid ha=a, \forall h\in H\rbrace.$$

 \begin{lemma}
 For all subgroup $H$ of $G$, the $H$-fixed point set $A^H$ is a sub Hom-pre-Lie algebra of $A$.
 \end{lemma}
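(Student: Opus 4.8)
The plan is to verify directly that $A^H$ inherits every piece of structure from $A$, so that no genuinely new computation is needed. First I would check that $A^H$ is a linear subspace of $A$: since each $\psi_h : A \to A$ is linear by Condition~(1) of the group action, for $a, b \in A^H$ and $\lambda \in \mathbb{K}$ we get $h(\lambda a + b) = \lambda\,(ha) + hb = \lambda a + b$ for every $h \in H$, and $0 \in A^H$ trivially; hence $\lambda a + b \in A^H$ and $A^H$ is a subspace.

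Next I would show that $A^H$ is stable under the product $\c$ and under the structure map $\alpha$. For $a, b \in A^H$ and $h \in H$, Condition~(4) of the action gives $h(a \c b) = (ha) \c (hb) = a \c b$, so $a \c b \in A^H$. Likewise, applying the equivariance $\alpha(g \c a) = g\,\alpha(a)$ (the second half of Condition~(4)) with $g = h$ yields $h(\alpha(a)) = \alpha(ha) = \alpha(a)$, so $\alpha(a) \in A^H$. Therefore the restrictions $\c\big|_{A^H \times A^H}$ and $\alpha\big|_{A^H}$ are well-defined maps $A^H \times A^H \to A^H$ and $A^H \to A^H$.

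Finally, the Hom-pre-Lie identity and the multiplicativity condition $\alpha(x \c y) = \alpha(x) \c \alpha(y)$ from the definition of a Hom-pre-Lie algebra are identities satisfied by all elements of $A$, hence a fortiori by all elements of the subspace $A^H$; no reverification is required. Combining this with the subspace and stability statements above shows that $(A^H, \c\big|_{A^H}, \alpha\big|_{A^H})$ is a Hom-pre-Lie algebra and a sub Hom-pre-Lie algebra of $A$.

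There is essentially no obstacle in this argument; the only point that genuinely uses a hypothesis beyond linearity is the $\alpha$-stability of $A^H$, which relies precisely on the compatibility $\alpha(g \c a) = g\,\alpha(a)$ in Condition~(4) — without it, $A^H$ would still be a subalgebra for the product but need not be preserved by the structure map, so I would take care to flag that step explicitly.
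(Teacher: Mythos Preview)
Your proof is correct and follows essentially the same approach as the paper: both verify closure of $A^H$ under the product and under $\alpha$ via the action axioms $h(a\c b)=ha\c hb$ and $h(\alpha(a))=\alpha(ha)$. You simply spell out the subspace check and the inheritance of the Hom-pre-Lie identity more explicitly than the paper, which leaves those implicit.
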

 \begin{proof}
 It is enough to show that $A^H$ is closed under multiplication and if $a\in A^H$ then $\alpha(a)\in A^H$.
 
 Let $a, b\in A^H$. Observe that
 $$h(a\c b) = ha\c hb = a\c b,~~~h(\alpha(a))= \alpha(h\c a)= \alpha(a),~\text{for all}~h\in H.$$
 Thus, $A^H$ is a sub Hom-pre-Lie algebra of $A$.
 \end{proof}

 An $O_G$-module is a contravariant functor
$$\mathcal{M}: O_G\to {\bf{Mod}}.$$
Note that ${\bf{Mod}}$ denotes a category of $\mathbb{K}$-modules, the category whose objects are $O_G$-modules with morphisms the natural transformations between $O_G$-modules is an abelian category denoted by $C_G$.

 \begin{definition}
An $O_G$-Hom-pre-Lie algebra is a contravariant functor
$$\mathcal{A}: O_G\to \bf{HpLie}.$$
$\bf{HpLie}$ denotes a category of Hom-pre-Lie algebras over $\mathbb{K}$ with Hom-pre-Lie algebra morphisms between them.
\end{definition}
\begin{example}
Suppose $A$ is a Hom-pre-Lie algebra equipped with an action of $G$, then we have a contravariant functor 
$$\chi_A:O_G\to\bf{HpLie},$$
given by $\chi_A(G/H)=A^H$ and for a morphism $\hat{g}:G/H\to G/K$ corresponding to the subconjugacy relation $g^{-1}Hg\subseteq K$,
$$\chi_A(\hat{g})=\psi_g: A^K\to A^H.$$
Thus, $\chi_A$ is an $O_G$-Hom-pre-Lie algebra.
\end{example}

A representation of an $O_G$-Hom-pre-Lie algebra $\mathcal{A}$ is an $O_G$-module $\mathcal{M}$ together with a natural transformation $\mathcal{\beta}$ on $\mathcal{M}$, and the following natural transformations
\begin{align*}
\mathcal{\rho}: \mathcal{A}\otimes \mathcal{M }\to\mathcal{M},~\mathcal{\mu} : \mathcal{M}\otimes \mathcal{A }\to\mathcal{M}
\end{align*}
such that for all $G/H\in\text{Ob}(O_G)$, $\mathcal{M}(G/H)$ is a non-equivariant representation of $\mathcal{A}(G/H)$ associated with natural transformations $\eta_l,\eta_r$.
\begin{remark}
From the above definition of the representation in an equivariant setting we have the following relations coming from the naturality of $\eta_l,\eta_r$.
\begin{align}
\mathcal{\rho}(G/H)\circ (\mathcal{A}(\hat{g})\otimes \mathcal{M}(\hat{g}))=\mathcal{M}(\hat{g})\circ \mathcal{\rho}(G/K),\\
\mathcal{\mu}(G/H)\circ (\mathcal{A}(\hat{g})\otimes \mathcal{M}(\hat{g}))=\mathcal{M}(\hat{g})\circ \mathcal{\mu}(G/K).
\end{align}

\end{remark}
\begin{remark}
It is well-known that any Hom-pre-Lie algebra $A$ is a representation over itself. In the equivariant setting also one can easily see that any $O_G$-Hom-pre-Lie algebra $\mathcal{A}$ can be thought of as a representation over itself. 
\end{remark}
We set $$S^n(\mathcal{A}; \mathcal{M}):=\bigoplus_{H\leq G}\widetilde{C^n}(\mathcal{A}(G/H), \mathcal{M}(G/H)).$$
We define
\begin{align*}
\delta^n:&S^n(\mathcal{A}; \mathcal{M})\to S^{n+1}(\mathcal{A};\mathcal{M}),\\
&\delta^n:=\bigoplus_{H\leq G}\widetilde{\partial}^n_H.
\end{align*}
Here $\widetilde{\partial}^H_n: \widetilde{C^n}(\mathcal{A}(G/H),\mathcal{M}(G/H))\to \widetilde{C^{n+1}}(\mathcal{A}(G/H), \mathcal{M}(G/H))$ is a non-equivariant coboundary map.

Clearly,$ \lbrace S^\sharp(\mathcal{A}; \mathcal{M}),\delta\rbrace$ is a cochain complex. Throughout this paper, we take $O_G$-Hom-pre-Lie algebra $\mathcal{A}$ as $\chi_A$ and consider the cohomology of $\chi_A$ with coefficients in $\chi_A$. We define a subcomplex of this cochain complex as follows:
\begin{definition}
A cochain $c=\lbrace (c^H_\nu, c^H_\alpha) \mid H\leq G\rbrace$ is said to be invariant if for every morphism $\hat{g}: G/H\to G/K$ corresponding to a subconjugacy relation $g^{-1}Hg\subseteq K$, following relation holds,
\begin{align*}
& c^H_\nu\circ \mathcal{A}(\hat{g})^{\otimes n} =\mathcal{A}(\hat{g})\circ c^K_\nu,\\
&  c^H_\alpha\circ \mathcal{A}(\hat{g})^{\otimes {n-1}} =\mathcal{A}(\hat{g})\circ c^H_\alpha.
\end{align*}
As $\chi_A$ is an $O_G$-Hom-pre-Lie algebra corresponding to $A$. This is same as,
\begin{align*}
& c^H_\nu\circ \psi_g^{\otimes n}=\psi_g\circ c^K_\nu,\\
& c^H_\alpha\circ \psi_g^{\otimes {n-1}}=\psi_g\circ c^K_\alpha.
\end{align*}
\end{definition}
\begin{lemma}
The set of all invariant $n$-cochains is a subgroup $S_G^n(\chi_A;\chi_A)$ of $S^n(\chi_A;\chi_A)$. If $c=\lbrace (c^H_\nu, c^H_\alpha) \rbrace\in S^n(\chi_A;\chi_A)$ is an invariant cochain then $\delta^n(c)=\lbrace \widetilde{\partial}^n_H(c^H_\nu, c^H_\alpha)\rbrace\in S^{n+1}(\chi_A;\chi_A)$ is an invariant $(n+1)$-cochain.
\end{lemma}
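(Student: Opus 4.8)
The plan is to verify the two displayed claims separately. The first claim — that $S_G^n(\chi_A;\chi_A)$ is a subgroup of $S^n(\chi_A;\chi_A)$ — is immediate: the defining conditions $c_\nu^H\circ\psi_g^{\otimes n}=\psi_g\circ c_\nu^K$ and $c_\alpha^H\circ\psi_g^{\otimes(n-1)}=\psi_g\circ c_\alpha^K$ are linear in the cochain $c=\{(c_\nu^H,c_\alpha^H)\}$, so if $c$ and $c'$ both satisfy them then so does $c-c'$ (and the zero cochain trivially does). Hence the invariant cochains form a $\mathbb{K}$-submodule, which is all that is needed. I would dispatch this in one or two sentences.

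The substantive part is the second claim: invariance is preserved by the coboundary $\delta^n=\bigoplus_H\widetilde{\partial}_H^n$. Fix a morphism $\hat g:G/H\to G/K$ coming from $g^{-1}Hg\subseteq K$, so that $\mathcal A(\hat g)=\psi_g:A^K\to A^H$. By the definition of an $O_G$-Hom-pre-Lie algebra, $\psi_g$ is a Hom-pre-Lie algebra morphism, so $\psi_g(a\c b)=\psi_g(a)\c\psi_g(b)$ and $\psi_g\circ\alpha_K=\alpha_H\circ\psi_g$; since the representation here is $\chi_A$ over itself, the structure maps $\rho,\mu$ on the $K$- and $H$-components are the regular ones $L,R$, and the naturality squares in the preceding remark reduce to exactly these two identities. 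The idea is then to take an invariant cochain $c$, write out $\bigl(\widetilde{\partial}_H^n(c_\nu^H,c_\alpha^H)\bigr)$ applied to arguments $\psi_g(a_1),\dots,\psi_g(a_{n+1})$ using the explicit formulas (2.1)–(2.4) for $\widetilde{\partial}_{\nu\nu},\widetilde{\partial}_{\alpha\alpha},\widetilde{\partial}_{\nu\alpha},\widetilde{\partial}_{\alpha\nu}$, and push $\psi_g$ outward term by term. Each term of these formulas is built from the operations $\rho(\alpha^{k}(-))=L_{\alpha^k(-)}$, $\mu(\alpha^k(-))=R_{\alpha^k(-)}$, the product $\c$, the commutator $[\,,\,]_C$, the iterate $\alpha^k$, and finally $c_\nu^H$ or $c_\alpha^H$ applied to the inner arguments. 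Using $\psi_g\circ\alpha=\alpha\circ\psi_g$, $\psi_g(a\c b)=\psi_g a\c\psi_g b$ (hence also $\psi_g[a,b]_C=[\psi_g a,\psi_g b]_C$), and $\psi_g\bigl(\rho(x)v\bigr)=\rho(\psi_g x)\psi_g v$, $\psi_g\bigl(\mu(x)v\bigr)=\mu(\psi_g x)\psi_g v$ (the naturality of $\eta_l,\eta_r$), together with the invariance hypothesis $c_\nu^H\circ\psi_g^{\otimes n}=\psi_g\circ c_\nu^K$ and $c_\alpha^H\circ\psi_g^{\otimes(n-1)}=\psi_g\circ c_\alpha^K$, every summand of $\widetilde{\partial}_H^n(c^H)(\psi_g a_1,\dots)$ equals $\psi_g$ applied to the corresponding summand of $\widetilde{\partial}_K^n(c^K)(a_1,\dots)$. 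Summing, one gets $\widetilde{\partial}_{\nu\nu}c_\nu^H\circ\psi_g^{\otimes(n+1)}=\psi_g\circ\widetilde{\partial}_{\nu\nu}c_\nu^K$, and likewise for $\widetilde{\partial}_{\alpha\nu}$, $\widetilde{\partial}_{\nu\alpha}$, $\widetilde{\partial}_{\alpha\alpha}$; assembling them via $\widetilde{\partial}(\varphi,\psi)=(\widetilde{\partial}_{\nu\nu}\varphi-\widetilde{\partial}_{\alpha\nu}\psi,\ \widetilde{\partial}_{\nu\alpha}\varphi-\widetilde{\partial}_{\alpha\alpha}\psi)$ shows the pair $\widetilde{\partial}_H^n(c_\nu^H,c_\alpha^H)$ satisfies the two invariance relations at level $n+1$, i.e. $\delta^n(c)\in S_G^{n+1}(\chi_A;\chi_A)$.

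The main obstacle is purely bookkeeping: the formula for $\widetilde{\partial}_{\nu\nu}$ has four families of summands (and $\widetilde{\partial}_{\alpha\nu}$ two more), each with hatted arguments and powers $\alpha^{n-1}$, $\alpha^{n-2}$, so one must be careful that $\psi_g$ commutes with each ingredient and that the inner arguments fed to $c_\nu^K$/$c_\alpha^K$ are exactly $\psi_g$ applied to those fed to $c_\nu^H$/$c_\alpha^H$ — in particular that expressions like $\psi_g(\alpha(a_i)\c\alpha(a_{n+1}))$ and $\alpha^{n-1}(\psi_g a_i)\c\alpha^{n-1}(\psi_g a_{n+1})$ match up, which follows from multiplicativity of $\psi_g$ and $\psi_g\alpha=\alpha\psi_g$. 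I would present the argument for a single representative summand of $\widetilde{\partial}_{\nu\nu}$ in detail and remark that the remaining summands, and the maps $\widetilde{\partial}_{\alpha\alpha},\widetilde{\partial}_{\nu\alpha},\widetilde{\partial}_{\alpha\nu}$, are handled identically, so that no genuinely new idea is required beyond the three compatibilities of $\psi_g$ with $\c$, with $\alpha$, and with $\rho,\mu$, plus the invariance hypothesis on $c$.
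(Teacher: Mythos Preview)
Your proposal is correct and follows essentially the same route as the paper: reduce the invariance of $\delta^n(c)$ to checking that each of the four component maps $\widetilde{\partial}_{\nu\nu},\widetilde{\partial}_{\alpha\alpha},\widetilde{\partial}_{\nu\alpha},\widetilde{\partial}_{\alpha\nu}$ intertwines with $\psi_g$, do this explicitly for one of them (the paper writes out $\widetilde{\partial}_{\nu\nu}$), and declare the others analogous. You are in fact slightly more complete than the paper, since you also dispose of the subgroup claim and spell out the compatibilities of $\psi_g$ with $\c$, $\alpha$, $[\,,\,]_C$, $\rho$, $\mu$ that make the term-by-term argument go through.
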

\begin{proof}
Let $c=\lbrace (c^H_\nu, c^H_\alpha) \rbrace\in S^n(\chi_A;\chi_A)$ is an invariant cochain. This implies
\begin{align*}
& c^H_\nu (ga_1,ga_2,\ldots, ga_n)=g c^K_\nu (a_1,a_2,\ldots,a_n),\\
& c^H_\alpha(ga_1,ga_2,\ldots, ga_{n-1})=gc^K_\alpha (a_1,a_2,\ldots,a_{n-1}).
\end{align*}
It is enough to show that four component differentials $\widetilde{\partial}_{\nu \nu}, \widetilde{\partial}_{\nu \alpha}, \widetilde{\partial}_{\alpha \alpha},~\text{and}~\widetilde{\partial}_{\alpha \nu}$ respect the group action. Note that 
\begin{align*}
&\widetilde{\partial}_{\nu \nu} (c^H_\nu)(\psi_g(a_1), \ldots, \psi_g(a_{n+1}))\\
&= \widetilde{\partial}_{\nu \nu} (c^H_\nu)(ga_1, \ldots, ga_{n+1})\\
&\sum^{n}_{i=1}(-1)^{i+1}\rho(\alpha^{n-1}(ga_i))c^H_\nu(ga_1,\ldots, \widehat{ga_i}, \ldots, ga_{n+1}) \\
&+ \sum^{n}_{i=1}(-1)^{i+1}\mu(\alpha^{n-1}(ga_{n+1}))c^H_\nu(ga_1,\ldots,\widehat{ga_i}, \ldots,  ga_{n}, ga_i)\nonumber\\
&-\sum^{n}_{i=1}(-1)^{i+1} c^H_\nu(\alpha(ga_1),\ldots, \widehat{ga_i}, \ldots,\alpha(g a_n), g a_i\c ga_{n+1})\nonumber\\
&=g\big(\sum^{n}_{i=1}(-1)^{i+1}\rho(\alpha^{n-1}(a_i))c^H_\nu(a_1,\ldots, \widehat{a_i}, \ldots, a_{n+1})\big) \\
&+ g\big(\sum^{n}_{i=1}(-1)^{i+1}\mu(\alpha^{n-1}(a_{n+1}))c^H_\nu(a_1,\ldots,\widehat{a_i}, \ldots,  a_{n}, a_i)\big)\nonumber\\
&- g\big(\sum^{n}_{i=1}(-1)^{i+1} c^H_\nu(\alpha(a_1),\ldots, \widehat{a_i}, \ldots,\alpha(a_n), a_i\c a_{n+1})\big)\nonumber\\
&= g \widetilde{\partial}_{\nu \nu} (c^H_\nu)(a_1, \ldots, a_{n+1})
\end{align*}
In a similar way, it is a routine work to check that other three components of the differential also preserves the group action. Thus, $\delta^n(c)=\lbrace \widetilde{\partial}^n_H(c^H_\nu, c^H_\alpha)\rbrace\in S^{n+1}(\chi_A;\chi_A)$ is an invariant $(n+1)$-cochain. 
\end{proof}

Thus, $(S^{\sharp}_G(\chi_A;\chi_A), \delta)$ is a cochain complex and homology of this complex is called equivariant cohomology of the Hom-pre-Lie algebra $A$. We denote $n$th cohomology of $A$ by $\widetilde{H}_G^n(A, A)$.
 



\section{Equivariant formal deformation of Hom-pre-Lie algebras}\label{sec 5}
In this section, we study a one-parameter formal deformation theory for Hom-pre-Lie algebras equipped with a group action. We also show that equivariant cohomology defined in Section \ref{equiv coho} is the deformation  cohomology of such formal deformations.

Let $\chi_A$ is an $O_G$-Hom-pre-Lie algebra. For any subgroup $H$ of $G$, we denote $A^H[[t]]$ as a formal power series ring in variable $t$ over $H$-fixed point sets $A^H$.
 \begin{definition} \label{equi formal deform defn}
 A one-parameter formal deformation of $\chi_A$ is a family consisting of pair of natural transformation $(\mathfrak{v}_t, \mathfrak{a}_t)$ such that for every $G/H\in \text{Ob}(O_G)$, components of $\mathfrak{v}_t$ are bilinear maps, and components of $\mathfrak{a}_t$ are (invertible) linear maps
\begin{align*}
&  \mathfrak{v}_t (G/H): \chi_A(G/H)[[t]]\times \chi_A(G/H)[[t]]\to \chi_A(G/H)[[t]],\\
&\mathfrak{a}_t(G/H): \chi_A(G/H)[[t]]\to \chi_A(G/H)[[t]].
 \end{align*} 
 This is same as
 \begin{align*}
 \nu^{ H}_t : A^H[[t]]\times A^H[[t]]\to A^H[[t]],~~~\alpha^{H}_t : A^H[[t]]\to A^H[[t]]
 \end{align*}
 which can be expressed in the following form
 \begin{align*}
 &\nu^{H}_t(a,b)= \nu^{H}_0(a,b)+\nu^{H}_1(a,b)t+\nu^{H}_2(a,b)t^2+\cdots,\\
 &\alpha^{H,}_t(a)= \alpha^{H}_0(a)+ \alpha_1^{H}(a)t+\alpha^{H}_2(a)t^2+\cdots,
 \end{align*}
 such that
 \begin{enumerate}
 \item $\nu^{H}_0(a,b)=a\c b$ and $\alpha^{H}_0(a)=\alpha(a)$ for $a,b\in A$.
\item For $i\geq 0$, $\nu^{H}_i$ and $\alpha^{H}_i$ are  $\mathbb{K}$-bilinear and  $\mathbb{K}$-linear maps respectively on $A^H$, and satisfies the following Hom-pre-Lie algebra relations.
 $$\nu^H_t(\nu^H_t(a,b), \alpha^H_t(c)) - \nu^H_t (\alpha_t(a), \nu^H_t(b,c)) = \nu^H_t(\nu^H_t(b,a), \alpha^H_t(c)) - \nu^H_t (\alpha^H_t(b), \nu^H_t(a,c)),$$ for all $a,b,c\in A^H$ and $H\leq G$. \label{equi deform cond 2} 
 
 \item For all the subgroups $H$ of $G$, the map $\alpha^H_t$ is multiplicative, that is, $\nu^H_t (\alpha^H_t(a), \alpha^H_t(b)) = \alpha^H_t (\nu^H_t(a,b))$.\label{equi deform cond 3}
 
 \item For every morphism $\hat{g}:G/H\to G/K$ and $g\in G$ satisfying $g^{-1}Hg\subseteq K$ the following relations hold-
 \begin{align*}
& \nu^{H}_t\circ (\chi_A(\hat{g})\otimes \chi_A(\hat{g}))=\chi_A(\hat{g})\circ \nu^{K}_t,\\
&\alpha_t^H \circ \chi_A(\hat{g}) = \alpha_t^K\circ \chi_A(\hat{g}).
  \end{align*}
 \end{enumerate}
 \end{definition}
 
 From the Condition \ref{equi deform cond 2}, we have the following equations for all $n\geq 0$.
 \begin{align}\label{equi deform hom-pre-lie 2}
 \sum_{\substack{i+j+k = n\\i,j,k\geq 0}} \nu^H_i(\nu^H_j(a,b), \alpha^H_k(c)) - \nu^H_i (\alpha^H_j(a), \nu^H_k(b,c)) - \nu^H_i(\nu^H_j(b,a), \alpha^H_k(c)) + \nu^H_i (\alpha^H_j(b), \nu^H_k(a,c)) = 0.
 \end{align}
 The Condition \ref{equi deform cond 3} is equivalent to the following equation:
 \begin{align} \label{equi deform alpha}
 \sum_{\substack{i+j+k = n\\i,j,k\geq 0}} \nu^H_i (\alpha^H_j(a), \alpha^H_k(b)) - \sum_{\substack{i+j=n\\i,j\geq 0}} \alpha^H_i (\nu^H_j(a,b))=0.
 \end{align}
 \begin{align}
 &  (\widetilde{\partial}_{\nu\nu} \nu^H_n - \widetilde{\partial}_{\alpha\nu}\alpha^H_n)(a,b,b)\\
 &= \sum_{\substack{i+j+k = n\\i,j,k\geq 0}} \nu^H_i(\nu^H_j(a,b), \alpha^H_k(c)) - \nu^H_i (\alpha^H_j(a), \nu^H_k(b,c)) - \nu^H_i(\nu^H_j(b,a), \alpha^H_k(c)) + \nu^H_i (\alpha^H_j(b), \nu^H_k(a,c)).
 \end{align}
 Note that
 \begin{align*}
 (\widetilde{\partial}_{\nu\nu} \nu^H_n)(a, b, c) = &\alpha(a)\c \nu^H_n(b,c) - \alpha(b)\c \nu^H_n(a,c) + \nu^H_n(b,a)\c \alpha(c) - \nu^H_n(a, b)\c \alpha(c) - \nu^H_n(\alpha(b), a\c c)  \\
& + \nu^H_n(\alpha(a), b\c c) - \nu^H_n(a\c b, \alpha(c)) - \nu^H_n(b\c a, \alpha(c)),~\text{for all}~a,b,c\in A^H.
 \end{align*}
 and
 \begin{align*}
 (\widetilde{\partial}_{\alpha^H\nu^H}\alpha^H_n)(a, b, c) = \alpha^H_n(b) \c (a \c c) - \alpha^H_n(a) (b\c c)+ (a \c b) \c \alpha^H_n(c) - (b\c a)\c \alpha^H_n(c)
 \end{align*}
 The Equation \ref{equi deform alpha} is same as the following equation involving differentials.
 \begin{align}\label{equi deform alpha diff equ}
  (\widetilde{\partial}_{\nu\alpha}\nu^H_n-\widetilde{\partial}_{\alpha \alpha} \alpha^H_n)(a, b) = \sum_{\substack{i+j+k = n\\0\leq i,j,k\leq n-1}} \nu^H_i (\alpha^H_j(a), \alpha^H_k(b)) - \sum_{\substack{i+j=n\\ 0\leq i,j\leq n-1}} \alpha^H_i (\nu^H_j(a,b)).
 \end{align}
 Observe that
 \begin{align*}
 (\widetilde{\partial}_{\alpha\alpha} \alpha^H_n) (a,b) = \alpha(a)\c \alpha^H_n(b) + \alpha^H_n(a)\c \alpha(b) - \alpha^H_n(a\c b).
 \end{align*}
 and 
  \begin{align*}
 (\tilde{\partial_{\nu\alpha}} \nu^H_n) (a,b) = \alpha(\nu^H_n(a,b)) -\nu^H_n(\alpha(a), \alpha(b)).
 \end{align*}
 
 \begin{definition}
 We define
 $$\nu^G_1 = \oplus_{H\leq G} \nu_1^H,~~~\alpha^G_1 =\oplus_{H\leq G} \alpha^H_1$$
 The pair $(\nu^G_1,\alpha^G_1)$ is called the equivariant infinitesimal of the given deformation $(\mathfrak{v}_t, \mathfrak{a}_t)$. More generally, assume that $(\nu^H_n, \alpha^H_n)$ is the first non-zero term after $(\nu^H_0,\alpha^H_0)$ of $(\mathfrak{v}_t, \mathfrak{a}_t)$, for all subgroups $H$ of $G$ and for some $n\geq 0$. We define the $n$th equivariant infinitesimal of the equivariant deformation $(\mathfrak{v}_t, \mathfrak{a}_t)$ as the pair $(\nu^G_n, \alpha^G_n)$, where
 $$\nu^G_n = \oplus_{H\leq G} \nu_n^H,~~~\alpha^G_n =\oplus_{H\leq G} \alpha^H_n$$
 \end{definition}
 Similar to the non-equivariant case, we have the following theorem.
 \begin{theorem}
 Let $(\mathfrak{v}_t, \mathfrak{a}_t)$ be an equivariant one-parameter formal deformation of the Hom-pre-Lie algebra $A$. Then the equivariant infinitesimal of a given deformation of $A$ equipped with an action of $G$ is a $2$-cocycle of the equivariant $\alpha$-type cohomology of $A$.
 \end{theorem}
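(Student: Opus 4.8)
The plan is to deduce the equivariant statement from its non-equivariant counterpart, Theorem \ref{infinitesimal thm}, applied one fixed-point subalgebra at a time, and then to record that the resulting family of $2$-cocycles is invariant, so that it represents a class in the equivariant complex $(S^\sharp_G(\chi_A;\chi_A),\delta)$.

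First I would note that, for each subgroup $H\le G$, the pair $(\nu^H_t,\alpha^H_t)$ furnished by Definition \ref{equi formal deform defn} is precisely a one-parameter formal deformation of the Hom-pre-Lie algebra $A^H$ in the sense of Definition \ref{formal deform defn}: items (1)--(3) of Definition \ref{equi formal deform defn} say exactly that the maps have the right shape, reduce to $(\cdot,\alpha)$ modulo $t$, and satisfy on $A^H$ the deformed Hom-pre-Lie identity \ref{equi deform hom-pre-lie 2} and the deformed multiplicativity \ref{equi deform alpha}. By hypothesis $(\nu^H_n,\alpha^H_n)$ is the first term after $(\nu^H_0,\alpha^H_0)$ which does not vanish, so it is the $n$th infinitesimal of this deformation of $A^H$, and Theorem \ref{infinitesimal thm} yields $\widetilde{\partial}(\nu^H_n,\alpha^H_n)=0$. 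Concretely, one extracts the coefficient of $t^n$ in \ref{equi deform hom-pre-lie 2} and in \ref{equi deform alpha}: because $n$ is the first nonzero index, every mixed term contains some $\nu^H_i$ or $\alpha^H_i$ with $0<i<n$ and therefore drops out, leaving $\widetilde{\partial}_{\nu\nu}\nu^H_n-\widetilde{\partial}_{\alpha\nu}\alpha^H_n=0$ and $\widetilde{\partial}_{\nu\alpha}\nu^H_n-\widetilde{\partial}_{\alpha\alpha}\alpha^H_n=0$, that is, $\widetilde{\partial}(\nu^H_n,\alpha^H_n)=0$ in $\widetilde{C}^3(A^H,A^H)$.

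Next I would use item (4) of Definition \ref{equi formal deform defn}, namely $\nu^H_t\circ(\chi_A(\hat g)\otimes\chi_A(\hat g))=\chi_A(\hat g)\circ\nu^K_t$ and $\alpha^H_t\circ\chi_A(\hat g)=\alpha^K_t\circ\chi_A(\hat g)$ for each morphism $\hat g:G/H\to G/K$. Expanding both sides in powers of $t$ and comparing coefficients of $t^n$ (and recalling $\chi_A(\hat g)=\psi_g$) gives $\nu^H_n\circ\psi_g^{\otimes 2}=\psi_g\circ\nu^K_n$ and $\alpha^H_n\circ\psi_g=\psi_g\circ\alpha^K_n$, which are exactly the relations defining an invariant $2$-cochain, so $(\nu^G_n,\alpha^G_n)=\big(\oplus_{H\le G}\nu^H_n,\ \oplus_{H\le G}\alpha^H_n\big)$ lies in $S^2_G(\chi_A;\chi_A)$. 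Since $\delta^2=\oplus_{H\le G}\widetilde{\partial}^2_H$ and the Lemma in Section \ref{sec 4} guarantees that $\delta$ sends invariant cochains to invariant cochains, the first step gives
\[
\delta^2(\nu^G_n,\alpha^G_n)=\bigoplus_{H\le G}\widetilde{\partial}(\nu^H_n,\alpha^H_n)=0,
\]
so the equivariant infinitesimal is a $2$-cocycle of the equivariant $\alpha$-type cohomology of $A$. The only point that needs care is that the index $n$ be simultaneously first-nonzero for all $H$ --- which is precisely how the $n$th equivariant infinitesimal is defined --- so that the lower-order terms genuinely disappear from the $t^n$-coefficients; granting this, the whole argument is a transcription of the non-equivariant proof together with the naturality conditions built into Definition \ref{equi formal deform defn}.
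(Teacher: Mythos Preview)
Your proposal is correct and follows essentially the same route as the paper, which simply remarks ``Similar to the non-equivariant case'' after recording the componentwise deformation equations \ref{equi deform hom-pre-lie 2} and \ref{equi deform alpha}. You make explicit what the paper leaves implicit: the $H$-wise application of Theorem \ref{infinitesimal thm} and the verification, via condition (4) of Definition \ref{equi formal deform defn}, that the resulting family $\{(\nu^H_n,\alpha^H_n)\}_{H\le G}$ is an invariant cochain in $S^2_G(\chi_A;\chi_A)$.
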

 
Now, we show how equivariant $\alpha$-type cohomology control obstructions of given equivariant formal deformations.
 \begin{definition}
 \emph{
 An equivariant deformation of order $n$ of $A$ eqipped with action of $G$ is given by a $\mathbb{K}[[t]]$-bilinear map $\nu^H_t : A^H[[t]]\times A^H[[t]] \to A^H[[t]]$ and a $\mathbb{K}[[t]]$-linear map $\alpha^H_t : A^H[[t]] \to A^H[[t]]$ of the forms
 $$\nu^H_t = \sum^{n}_{i=0} \nu^H_i t^i,~~~\alpha^H_t = \sum^n_{i=0} \alpha^H_i t^i,$$
 such that the pair $(\nu^H_t, \alpha^H_t)$ satisfies the definition of the formal deformation of $A^H$ (mod $t^{n+1}$), for all subgroups $H$ of $G$.
 }
 \end{definition}
 We say an equivariant deformation $(\mathfrak{v}_t, \mathfrak{a}_t)$ of order $n$ is extendable to a deformation of order $(n+1)$ if for all subgroups $H$, there exists $\nu^H_{n+1} \in \widetilde{C_\nu^{n+1}}(A,A)$ and $\alpha^H_{n+1} \in \widetilde{C_\alpha^{n+1}}(A,A)$ such that 
 $$\bar{\nu^H_t} = \nu^H_t +\nu^H_{n+1}t^{n+1},~~~\bar{\alpha^H_t} = \alpha^H_t +\alpha^H_{n+1}t^{n+1},$$
 and the pair $(\bar{\nu^H_t},\bar{\alpha^H_t})$ satisfies the Definition \ref{equi formal deform defn} mod($t^{n+2}$).
  From an equivariant deformation $(\mathfrak{v}_t, \mathfrak{a}_t)$ of order $(n+1)$, we have the following equations:
   \begin{align*}
& \sum_{\substack{i+j+k = n+1\\i,j,k\geq 0}} \nu^H_i(\nu^H_j(a,b), \alpha^H_k(c)) - \nu^H_i (\alpha^H_j(a), \nu^H_k(b,c)) - \nu^H_i(\nu^H_j(b,a), \alpha^H_k(c)) + \nu^H_i (\alpha^H_j(b), \nu^H_k(a,c)) = 0,\\
& \sum_{\substack{i+j+k = n+1\\i,j,k\geq 0}} \nu^H_i (\alpha^H_j(a), \alpha^H_k(b)) - \sum_{\substack{i+j=n+1\\i,j\geq 0}} \alpha^H_i (\nu^H_j(a,b))=0.
 \end{align*}
 We can re-write the above equations as follows:
 \begin{align*}
& (\widetilde{\partial}_{\nu\nu} \nu^H_n - \widetilde{\partial}_{\alpha\nu}\alpha^H_n)(a, b, c) =  \sum_{\substack{i+j+k=n+1\\0\leq  i,j,k\leq n}} ( \nu^H_i \circ_{\alpha^H_j} \nu^H_k) (a, b, c) -   \sum_{\substack{i+j+k=n+1\\ 0\leq i,j,k\leq n}} (\nu^H_i \circ_{\alpha^H_j} \nu^H_k) (b, a, c),\\ 
& (\widetilde{\partial}_{\alpha\alpha} \alpha^H_n - \widetilde{\partial}_{\nu\alpha}\nu^H_n)(a, b) = \sum_{\substack{i+j+k = n+1\\0\leq i,j,k\leq n}} \nu^H_i (\alpha^H_j(a), \alpha^H_k(b)) - \sum_{\substack{i+j=n+1\\ 0\leq i,j\leq n}} \alpha^H_i (\nu^H_j(a,b)).
 \end{align*}
 We define the $n$th equivariant obstruction to extend an equivariant deformation of order $n$ to $n+1$ as $O_G^n(A):= (\oplus_{H\leq G}O^{n,H}_\nu (A), \oplus_{H\leq G}O^{n,H}_\alpha (A))$, where
 \begin{align}
& O^{n,H}_\nu (A)= \sum_{\substack{i+j+k=n+1\\0\leq  i,j,k\leq n}} ( \nu^H_i \circ_{\alpha^H_k} \nu^H_j) (a, b, c) -   \sum_{\substack{i+j+k=n+1\\ 0\leq i,j,k\leq n}} (\nu^H_i \circ_{\alpha^H_k} \nu^H_j) (b, a, c),\\
& O^{n,H}_\alpha (A)= \sum_{\substack{i+j+k = n+1\\0\leq i,j,k\leq n}} \nu^H_i (\alpha^H_j(a), \alpha^H_k(b)) - \sum_{\substack{i+j=n+1\\ 0\leq i,j\leq n}} \alpha^H_i (\nu^H_j(a,b)).
 \end{align}
 The proofs of the following results are similar to the proofs of the corresponding non-equivariant results.
 \begin{theorem}
 An equivariant deformation $(\mathfrak{v}_t, \mathfrak{a}_t)$ of order $n$ is extendable to a deformation of order $n+1$ if and only if the cohomology class of $O_G^n(A)$ vanishes.
 \end{theorem}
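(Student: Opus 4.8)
The plan is to carry out the non-equivariant proof of the order-$n$ extension theorem of Section \ref{sec 3} component-wise over the orbit category $O_G$, while tracking invariance at every step. Throughout I work inside the invariant subcomplex $(S^\sharp_G(\chi_A;\chi_A),\delta)$, so that a cochain is a family $\lbrace (c^H_\nu,c^H_\alpha)\mid H\leq G\rbrace$ compatible with all the structure maps $\chi_A(\hat g)=\psi_g$.

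For the ``only if'' direction, suppose the order-$n$ equivariant deformation $(\mathfrak{v}_t,\mathfrak{a}_t)$, given by the families $\lbrace\nu^H_t\rbrace$ and $\lbrace\alpha^H_t\rbrace$, extends to order $n+1$ by adjoining terms $\nu^H_{n+1}$ and $\alpha^H_{n+1}$. Extracting the coefficient of $t^{n+1}$ from the Hom-pre-Lie relation (Condition \ref{equi deform cond 2}) and from the multiplicativity relation (Condition \ref{equi deform cond 3}), exactly as in the non-equivariant computation leading to Equations \ref{deform diff equ} and \ref{deform alpha diff equ}, shows that for each subgroup $H$ the pair $(O^{n,H}_\nu(A),O^{n,H}_\alpha(A))$ is the $H$-component of $\widetilde{\partial}(\nu^H_{n+1},\alpha^H_{n+1})$; that is, $O_G^n(A)=\delta(\lbrace(\nu^H_{n+1},\alpha^H_{n+1})\rbrace)$. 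The naturality requirement (Condition 4 of Definition \ref{equi formal deform defn}) for the extended deformation forces $\lbrace(\nu^H_{n+1},\alpha^H_{n+1})\rbrace$ to be an invariant $2$-cochain, so this equality holds in $S_G^3(\chi_A;\chi_A)$, and since $\delta^2=0$ we obtain $\delta(O_G^n(A))=0$, whence $[O_G^n(A)]=0$ in $\widetilde{H}_G^3(A,A)$. A preliminary observation, just as in the non-equivariant setting, is that $O_G^n(A)$ is itself a $3$-cocycle lying in the invariant subcomplex, so that its cohomology class is defined: closedness is the standard manipulation of the lower-order deformation equations, and invariance follows term-by-term from Condition 4 of the order-$n$ deformation together with the fact (established in Section \ref{sec 4}) that the four component differentials $\widetilde{\partial}_{\nu\nu}$, $\widetilde{\partial}_{\nu\alpha}$, $\widetilde{\partial}_{\alpha\alpha}$, $\widetilde{\partial}_{\alpha\nu}$ respect the $G$-action.

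Conversely, suppose $[O_G^n(A)]=0$ in $\widetilde{H}_G^3(A,A)$, so that $O_G^n(A)=\delta(\lbrace(\nu^H_{n+1},\alpha^H_{n+1})\rbrace)$ for some invariant $2$-cochain $\lbrace(\nu^H_{n+1},\alpha^H_{n+1})\mid H\leq G\rbrace\in S_G^2(\chi_A;\chi_A)$, chosen \emph{inside the invariant subcomplex}. Set $\bar{\nu^H_t}=\nu^H_t+\nu^H_{n+1}t^{n+1}$ and $\bar{\alpha^H_t}=\alpha^H_t+\alpha^H_{n+1}t^{n+1}$ for every subgroup $H$. Conditions 1 and 2 of Definition \ref{equi formal deform defn} are immediate. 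For Condition 3 one observes that the coefficients of $t^m$ with $m\leq n$ are unchanged by the modification and already vanish because $(\nu^H_t,\alpha^H_t)$ is an order-$n$ deformation, while the coefficient of $t^{n+1}$ is, component-wise, precisely the equation $O_G^n(A)=\delta(\lbrace(\nu^H_{n+1},\alpha^H_{n+1})\rbrace)$, which holds by the choice of the extending cochain. Condition 4 holds because $\lbrace\nu^H_t\rbrace$, $\lbrace\alpha^H_t\rbrace$ are already natural and $\lbrace\nu^H_{n+1}\rbrace$, $\lbrace\alpha^H_{n+1}\rbrace$ were chosen invariant, so their $t$-combination is again natural. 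Hence $(\bar{\mathfrak{v}}_t,\bar{\mathfrak{a}}_t)$ is an equivariant deformation of order $n+1$ extending $(\mathfrak{v}_t,\mathfrak{a}_t)$.

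The point demanding genuine care, beyond transcribing the non-equivariant argument, is the invariance bookkeeping. In the converse one must choose the extending $2$-cochain inside $S_G^2(\chi_A;\chi_A)$ rather than picking an unrelated extension at each $G/H$ separately, and it is precisely the vanishing of the class in $\widetilde{H}_G^3(A,A)$ --- as opposed to vanishing of each individual non-equivariant class in $\widetilde{H^3}(A^H,A^H)$ --- that supplies such a compatible choice; dually, in the forward direction one must verify that $O_G^n(A)$ lands in the invariant subcomplex, which reduces to Condition 4 of the order-$n$ deformation applied to each summand of the sums defining $O^{n,H}_\nu(A)$ and $O^{n,H}_\alpha(A)$. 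Once these two observations are in place, every remaining step is a coefficient-by-coefficient repetition of the non-equivariant proof, and I expect no further obstacle.
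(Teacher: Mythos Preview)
Your proposal is correct and follows essentially the same approach as the paper, which simply states that the proof is ``similar to the proofs of the corresponding non-equivariant results'' and gives no further detail. Your write-up is in fact more careful than the paper's: you explicitly track that $O_G^n(A)$ lies in the invariant subcomplex and that the extending $2$-cochain must be chosen inside $S_G^2(\chi_A;\chi_A)$, points the paper leaves implicit.
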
 
 
 \begin{cor}
 The obstruction of extending an equivariant infinitesimal deformation lies in $\widetilde{H^3}_G(A, A)$.
 \end{cor}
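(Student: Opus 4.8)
The plan is to mimic, component by component, the non-equivariant obstruction calculus of Section~\ref{sec 3}, adding one extra ingredient: that the equivariant obstruction is an \emph{invariant} cochain, hence lives in the subcomplex $S^\sharp_G(\chi_A;\chi_A)$ whose cohomology is $\widetilde{H}^\ast_G(A,A)$. Since the theorem just proved already asserts that the vanishing of the cohomology class of $O_G^n(A)$ governs extendability, the content of the corollary is precisely that $O_G^1(A)$ is an invariant $3$-cocycle.

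First I would fix a subgroup $H\leq G$ and restrict the equivariant deformation to the fixed-point sub Hom-pre-Lie algebra $A^H$, obtaining an ordinary order-$n$ deformation $(\nu^H_t,\alpha^H_t)$ of $A^H$. The non-equivariant theory of Section~\ref{sec 3} then applies verbatim and shows that the component $O^{n,H}(A)=(O^{n,H}_\nu(A),O^{n,H}_\alpha(A))$ is a $3$-cocycle of $\widetilde{C}^\bullet(A^H,A^H)$, that is $\widetilde{\partial}^H_3\big(O^{n,H}(A)\big)=0$. Since by construction $\delta^3=\bigoplus_{H\leq G}\widetilde{\partial}^H_3$, taking the direct sum over all $H$ gives $\delta^3\big(O_G^n(A)\big)=0$, so $O_G^n(A)$ is already a cocycle of $S^3(\chi_A;\chi_A)$.

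The second and main step is to show that $O_G^n(A)$ is invariant, i.e.\ that for every morphism $\hat g:G/H\to G/K$ coming from $g^{-1}Hg\subseteq K$ one has $O^{n,H}_\nu(A)\circ\psi_g^{\otimes 3}=\psi_g\circ O^{n,K}_\nu(A)$ and $O^{n,H}_\alpha(A)\circ\psi_g^{\otimes 2}=\psi_g\circ O^{n,K}_\alpha(A)$. Here I would use the explicit description of $O^{n,H}_\nu(A)$ as a signed sum of $\alpha^H_k$-associators $\nu^H_i\circ_{\alpha^H_k}\nu^H_j$ and of $O^{n,H}_\alpha(A)$ as a signed sum of the multiplicativity defects $\nu^H_i(\alpha^H_j(\cdot),\alpha^H_k(\cdot))$ and $\alpha^H_i(\nu^H_j(\cdot,\cdot))$, together with the last condition of Definition~\ref{equi formal deform defn}, which says each $\nu^H_i$ intertwines $\psi_g$ with $\nu^K_i$ and each $\alpha^H_i$ intertwines $\psi_g$ with $\alpha^K_i$. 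Combining this with the group-action axioms ($\psi_g$ is an algebra morphism commuting with $\alpha$), $\psi_g$ commutes with every composite appearing in those sums, term by term; summing the term-wise identities yields the two displayed relations, so $O_G^n(A)\in S^3_G(\chi_A;\chi_A)$.

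Finally, having shown $O_G^n(A)$ is an invariant $3$-cocycle, it determines a class $[O_G^n(A)]\in\widetilde{H^3}_G(A,A)$; specializing $n=1$, i.e.\ to the equivariant infinitesimal $(\nu^G_1,\alpha^G_1)$, the obstruction to extending it lies in $\widetilde{H^3}_G(A,A)$, as claimed. I expect the main obstacle to be the term-by-term bookkeeping in the invariance step: one must carefully match each summand of $O^{n,H}$ with the corresponding summand of $O^{n,K}$ under $\psi_g$ and keep track of the $\alpha$-powers and signs, exactly the kind of lengthy but routine verification already flagged in the proof of the main cohomology theorem. The cocycle property, by contrast, costs nothing new beyond Section~\ref{sec 3}.
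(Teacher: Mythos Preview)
Your proposal is correct and in fact more detailed than the paper's own treatment: the paper gives no explicit proof of this corollary (nor of the preceding theorem), merely stating that ``the proofs of the following results are similar to the proofs of the corresponding non-equivariant results.'' Your plan---reducing each $H$-component to the non-equivariant obstruction calculus of Section~\ref{sec 3} to get the cocycle condition, then verifying invariance of $O_G^n(A)$ via the naturality condition in Definition~\ref{equi formal deform defn}---is exactly the argument the paper leaves to the reader, and you have correctly identified that the invariance check is the only genuinely new ingredient beyond the non-equivariant case.
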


 Let $(\mathfrak{v}_t, \mathfrak{a}_t)$ and $(\mathfrak{v}^{\rq}_t, \mathfrak{a}^{\rq}_t)$ be two equivariant formal deformations of $A$, where 
 $(\nu^{'})^H_t = \sum_{i\geq 0} \nu^{'}_i t^i,~~~(\alpha^{'})^H_t = \sum_{i\geq 0} \alpha^{'}_i t^i$, for all subgroups $H$ of $G$.
 \begin{definition}
 Two equivariant deformations $(\mathfrak{v}_t, \mathfrak{a}_t)$ and $(\mathfrak{v}^{\rq}_t, \mathfrak{a}^{\rq}_t)$  are said to be equivalent if for all subgroups $H$ of $G$, there exists $\mathbb{K}[[t]]$-isomorphism $\Psi^H_t : A[[t]] \to A[[t]]$ of the form $\Psi^H_t = \sum_{i\geq 0} \psi^H_i t^i$ such that for all $i\geq 0$, 
 \begin{align}
 \label{equi cond equiv defor}
 & 1.~~~ \psi^H_0= \text{id}\\
 &2.~~~ \Psi^H_t \circ (\nu^{'}_t)^H = \nu^H_t \circ (\Psi^H_t \otimes \Psi^H_t),~\text{and}~ \alpha^H_t\circ \Psi^H_t = \Psi^H_t\circ (\alpha^{'}_t)^H.
 \end{align}
 \end{definition}
 \begin{definition}
 An equivariant deformation $(\mathfrak{v}_t, \mathfrak{a}_t)$ of $A$ is called trivial if $(\mathfrak{v}_t, \mathfrak{a}_t)$ is equivalent to $(\mathfrak{v}_0, \mathfrak{a}_0)$. A Hom-pre-Lie algebra $A$ equipped with a group action is called rigid if it has only trivial deformation upto equivalence.
 \end{definition}
 
 This is same as the following equations:
 \begin{align}
 \label{equivalent 10}&\sum_{i,j\geq 0}\psi^H_i((\nu\rq_j)^H(a,b))t^{i+j}=\sum_{i,j,k\geq 0}\nu_i(\psi^H_j(a),\psi^H_k(b))t^{i+j+k},\\
 &\label{equivalent 101}\sum_{i,j \geq 0}\alpha^H_i(\psi^H_j(x))t^{i+j}=\sum_{i,j\geq 0}\psi^H_i((\alpha\rq_j)^H(x))t^{i+j}.
 \end{align}
 
  Now comparing coefficients of $t$, we have
  \begin{align}\label{equi equivalent main}
&(\nu\rq_1)^H(a,b)+\psi^H_1((\nu\rq_0)^H(a,b))=\nu^H_1(a,b)+\nu^H_0(\psi^H_1(a),b)+\nu^H_0(a,\psi^H_1(b)),\\
\label{equi equivalent main 1}&\alpha^H_1(a)+\alpha^H_0(\psi^H_1(a))=(\alpha\rq_1)^H(a)+\psi^H_1((\alpha\rq_0)^H(a)).
  \end{align}
  The Equations (\ref{equi equivalent main}) and (\ref{equi equivalent main 1}) are same as
  \begin{align*}
& (\nu\rq_1)^H(a,b)-\nu^H_1(a,b)=a\c \psi^H_1(b)+\psi^H_1(a)\c b-\psi^H_1(a\c b)=\widetilde{\partial}_{\nu\nu} \psi^H_1(a,b).\\
& (\alpha_1\rq)^H (a) - \alpha^H_1(a) = \alpha(\psi^H_1(a)) - \psi^H_1(\alpha(a)) = \widetilde{\partial}_{\nu \alpha} \psi^H_1(a).
\end{align*}
Similar to the non-equivariant case, one can show the following results.
 \begin{proposition}
 Two equivalent formal deformations of $A$ equipped with an action of $G$ have cohomologous infinitesimals.
 \end{proposition}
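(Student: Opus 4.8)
The plan is to repeat, subgroup by subgroup, the computation carried out in the non-equivariant Proposition, and then to check the single genuinely equivariant point: that the $1$-cochain furnishing the coboundary lies in the invariant subcomplex $S^1_G(\chi_A;\chi_A)$ rather than merely in the ambient complex $S^\sharp(\chi_A;\chi_A)$.

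First I would record what the equivalence relations already supply. Expanding $\Psi^H_t\circ(\nu'_t)^H=\nu^H_t\circ(\Psi^H_t\otimes\Psi^H_t)$ and $\alpha^H_t\circ\Psi^H_t=\Psi^H_t\circ(\alpha'_t)^H$ in powers of $t$ and comparing the constant and first-order terms exactly as in Equations (\ref{equi equivalent main}) and (\ref{equi equivalent main 1}) gives, for every subgroup $H\le G$,
\[(\nu'_1)^H(a,b)-\nu^H_1(a,b)=\widetilde{\partial}_{\nu\nu}\psi^H_1(a,b),\qquad (\alpha'_1)^H(a)-\alpha^H_1(a)=\widetilde{\partial}_{\nu\alpha}\psi^H_1(a),\]
which is precisely the display preceding the statement. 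Since the $\alpha$-slot of a $1$-cochain is zero, one has $\widetilde{\partial}^1_H(\psi^H_1,0)=(\widetilde{\partial}_{\nu\nu}\psi^H_1,\widetilde{\partial}_{\nu\alpha}\psi^H_1)$; hence, setting $\psi^G_1:=\oplus_{H\le G}\psi^H_1\in S^1(\chi_A;\chi_A)$, the two identities above assemble into the levelwise equality $(\nu'^G_1,\alpha'^G_1)-(\nu^G_1,\alpha^G_1)=\delta^1(\psi^G_1)$ in $S^2(\chi_A;\chi_A)$.

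The remaining point — and the only one that is not a verbatim transcription of the non-equivariant argument — is to verify that $\psi^G_1$ is an \emph{invariant} cochain, i.e.\ $\psi^G_1\in S^1_G(\chi_A;\chi_A)$. Concretely, for every morphism $\hat g\colon G/H\to G/K$ coming from a subconjugacy relation $g^{-1}Hg\subseteq K$ we must show $\psi^H_1\circ\psi_g=\psi_g\circ\psi^K_1$ (the $\alpha$-part of the invariance condition being vacuous for $n=1$). I would obtain this by comparing coefficients of $t$ in the naturality identity $\Psi^H_t\circ\chi_A(\hat g)=\chi_A(\hat g)\circ\Psi^K_t$ satisfied by the family $\{\Psi^H_t\}$, which presents the equivalence as a natural isomorphism of the $O_G$-object $\chi_A[[t]]$: the $t^0$ term reduces to $\mathrm{id}\circ\psi_g=\psi_g\circ\mathrm{id}$ because $\psi^H_0=\mathrm{id}$, and the $t^1$ term is exactly the required relation. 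By the invariance lemma of Section~\ref{sec 4}, $\delta^1$ then carries $\psi^G_1\in S^1_G$ into $S^2_G$; since both equivariant infinitesimals are $2$-cocycles of the equivariant complex (by the equivariant infinitesimal theorem) and differ by $\delta^1(\psi^G_1)$ with $\psi^G_1\in S^1_G$, they represent the same class in $\widetilde{H}^2_G(A,A)$. The main obstacle is precisely this bookkeeping around naturality: one must make sure the homotopy $\psi^G_1$ really is invariant so that the entire comparison takes place inside the equivariant subcomplex; once that is in hand, the rest is the non-equivariant Proposition applied fixed-point set by fixed-point set.
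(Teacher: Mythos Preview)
Your argument is correct and, in fact, more complete than the paper's own treatment: the paper simply declares that ``similar to the non-equivariant case, one can show'' the proposition and provides no proof beyond the computations in Equations~(\ref{equi equivalent main})--(\ref{equi equivalent main 1}), which are exactly your first step. Your additional verification that $\psi^G_1$ is an \emph{invariant} $1$-cochain is the genuinely equivariant content and is handled correctly by extracting the $t^1$-coefficient of the naturality identity for $\Psi_t$.

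One point worth flagging: the paper's Definition of equivalence of equivariant deformations lists only the conditions $\psi^H_0=\mathrm{id}$ and the intertwining relations with $\nu_t,\alpha_t$; it does \emph{not} explicitly record the naturality requirement $\Psi^H_t\circ\chi_A(\hat g)=\chi_A(\hat g)\circ\Psi^K_t$ that you invoke. Without it the invariance of $\psi^G_1$ would not follow and the proposition as stated would be false in the equivariant cohomology $\widetilde{H}^2_G(A,A)$. You are right to read this naturality as implicit in the phrase ``natural transformation'' governing the whole equivariant setup, but it would be worth stating the assumption explicitly (or noting that the definition should be amended to include it) so that your argument is airtight against the letter of the paper's definitions.
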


 \begin{theorem}
 Let $(\mathfrak{v}_t, \mathfrak{a}_t)$ be a non-trivial equivariant formal deformation of the Hom-pre-Lie algebra $A$. Then $(\mathfrak{v}_t, \mathfrak{a}_t)$ is equivalent to a deformation whose infinitesimal is not a coboundary.
 \end{theorem}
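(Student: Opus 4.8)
The plan is to run the non-equivariant argument of Section \ref{sec 3} simultaneously over all subgroups $H\le G$, using the invariant subcomplex $S^\sharp_G(\chi_A;\chi_A)$ to make the resulting equivalence compatible with the $O_G$-structure. Let $(\nu^G_n,\alpha^G_n)=(\oplus_{H\le G}\nu^H_n,\oplus_{H\le G}\alpha^H_n)$ be the first non-vanishing equivariant infinitesimal of $(\mathfrak v_t,\mathfrak a_t)$. Since the components of $\mathfrak v_t,\mathfrak a_t$ are natural transformations, condition (4) of Definition \ref{equi formal deform defn} gives, order by order in $t$, that $(\nu^G_n,\alpha^G_n)$ is an invariant cochain; and by the equivariant infinitesimal theorem above it is a $2$-cocycle, i.e. $\delta(\nu^G_n,\alpha^G_n)=0$ in $S^3_G(\chi_A;\chi_A)$. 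If this cocycle is not a coboundary there is nothing to prove, so assume $(\nu^G_n,\alpha^G_n)=\delta(\phi^G_n)$ for some invariant $1$-cochain $\phi^G_n=\{\phi^H_n\}_{H\le G}\in S^1_G(\chi_A;\chi_A)$; since $\delta=\oplus_H\widetilde{\partial}_H$, this means $\nu^H_n=-\widetilde{\partial}_{\nu\nu}\phi^H_n$ and $\alpha^H_n=-\widetilde{\partial}_{\nu\alpha}\phi^H_n$ for every $H\le G$.

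For each $H$ define the $\mathbb K[[t]]$-linear automorphism $\Psi^H_t$ of $A^H[[t]]$ by $\Psi^H_t(a)=a+\phi^H_n(a)t^n$, exactly as in the non-equivariant proof. The one genuinely new point is to check that the family $\{\Psi^H_t\}_{H\le G}$ assembles into a morphism of the relevant $O_G$-objects: for a morphism $\hat g:G/H\to G/K$ in $O_G$ (a subconjugacy relation $g^{-1}Hg\subseteq K$) one needs $\psi_g\circ\Psi^K_t=\Psi^H_t\circ\psi_g$, and this follows immediately from the invariance relation $\phi^H_n\circ\psi_g=\psi_g\circ\phi^K_n$ together with $G$-equivariance of the identity map. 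Hence $\{\Psi^H_t\}$ is an admissible equivalence datum in the sense of the definition of equivalent equivariant deformations.

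Now set, componentwise,
\[
\bar\nu^H_t=(\Psi^H_t)^{-1}\circ\nu^H_t\circ(\Psi^H_t\otimes\Psi^H_t),\qquad \bar\alpha^H_t=(\Psi^H_t)^{-1}\circ\alpha^H_t\circ\Psi^H_t .
\]
Componentwise this is precisely the non-equivariant conjugation, so each $(\bar\nu^H_t,\bar\alpha^H_t)$ satisfies the Hom-pre-Lie deformation conditions over $A^H$, and the naturality of $\{\Psi^H_t\}$ from the previous paragraph ensures that $(\bar{\mathfrak v}_t,\bar{\mathfrak a}_t)$ is again a family of natural transformations, i.e. an equivariant formal deformation, equivalent to $(\mathfrak v_t,\mathfrak a_t)$. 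Expanding and comparing coefficients of $t^n$ (the same computation as in Section \ref{sec 3}) gives $\bar\nu^H_n-\nu^H_n=\widetilde{\partial}_{\nu\nu}\phi^H_n$ and $\bar\alpha^H_n-\alpha^H_n=\widetilde{\partial}_{\nu\alpha}\phi^H_n$ for all $H$, whence $\bar\nu^H_n=\bar\alpha^H_n=0$; the $n$th equivariant infinitesimal of the new deformation vanishes. Repeating the argument removes any equivariant infinitesimal that is a coboundary, pushing the first non-vanishing infinitesimal to a strictly higher order at each stage, so since $(\mathfrak v_t,\mathfrak a_t)$ is non-trivial the process must halt, leaving a deformation equivalent to $(\mathfrak v_t,\mathfrak a_t)$ whose infinitesimal is not a coboundary. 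I expect the routine parts to be the $t^n$-coefficient computation and the verification that componentwise conjugation preserves the deformation axioms (verbatim repetitions of the non-equivariant case); the main obstacle — and the only place where the equivariant setting really intervenes — is the second step, namely knowing that the trivializing cochain can be chosen invariant and that the maps $\Psi^H_t$ glue into an $O_G$-morphism, which is exactly what the invariant subcomplex $S^\sharp_G(\chi_A;\chi_A)$ is built to supply.
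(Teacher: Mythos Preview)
Your proposal is correct and follows exactly the route the paper indicates: the paper does not supply a separate proof for this equivariant theorem but simply refers the reader to the non-equivariant argument, and your proof is precisely that argument run componentwise over all $H\le G$, together with the invariance checks needed to stay inside $S^\sharp_G(\chi_A;\chi_A)$. The only additions beyond the paper's non-equivariant proof are the naturality of $\{\Psi^H_t\}$ and the observation that the trivializing cochain can be taken invariant, both of which are the expected equivariant bookkeeping.
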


  \begin{cor}
 If $\widetilde{H^2}_G(A, A)=0$ then every equivariant deformation of $A$ is equivalent to a trivial deformation, that is, $A$ is equivariantly rigid.
 \end{cor}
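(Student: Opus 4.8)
The plan is to read off this corollary directly from the preceding theorem on non-trivial equivariant deformations, mirroring the non-equivariant argument verbatim. I would argue by contraposition: suppose $A$, together with its $G$-action, admits a non-trivial equivariant formal deformation $(\mathfrak{v}_t, \mathfrak{a}_t)$, and let $n\geq 1$ be the least index for which the $n$th equivariant infinitesimal $(\nu_n^G,\alpha_n^G)=\bigl(\oplus_{H\leq G}\nu_n^H,\ \oplus_{H\leq G}\alpha_n^H\bigr)$ is non-zero.

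First, I would invoke the equivariant infinitesimal theorem to conclude that $(\nu_n^G,\alpha_n^G)$ is a $2$-cocycle in the equivariant complex $\bigl(S_G^\sharp(\chi_A;\chi_A),\delta\bigr)$, so it determines a class in $\widetilde{H^2}_G(A,A)$. The hypothesis $\widetilde{H^2}_G(A,A)=0$ forces that class to vanish, i.e.\ $(\nu_n^G,\alpha_n^G)$ is a coboundary: there is an invariant $1$-cochain $\{\phi_n^H\}_{H\leq G}\in S_G^1(\chi_A;\chi_A)$ whose $\widetilde{\partial}_{\nu\nu}$- and $\widetilde{\partial}_{\nu\alpha}$-images are (up to sign) $\nu_n^H$ and $\alpha_n^H$ for every subgroup $H$.

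Next, the preceding theorem asserts that a non-trivial equivariant deformation is equivalent to one whose equivariant infinitesimal is \emph{not} a coboundary, and the equivariant analogue of the proposition on cohomologous infinitesimals guarantees that passing to an equivalent deformation leaves the cohomology class of the infinitesimal unchanged. Since that class is zero no matter which equivalent representative one picks, the infinitesimal is always a coboundary — contradicting the theorem. Hence $A$ admits no non-trivial equivariant deformation; every equivariant deformation is equivalent to $(\mathfrak{v}_0,\mathfrak{a}_0)$, so $A$ is equivariantly rigid.

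The logical skeleton is essentially one line, so the only real care needed is in confirming that the two inputs are genuinely available in the equivariant setting. The point I would check explicitly is internal to the proof of the preceding theorem: one must see that the formal automorphism $\Psi_t^H(a)=a+\phi_n^H(a)\,t^n$ built from the \emph{invariant} $1$-cochain $\{\phi_n^H\}$ is a natural family, i.e.\ $\Psi_t^H\circ\chi_A(\hat g)=\chi_A(\hat g)\circ\Psi_t^K$ for every morphism $\hat g\colon G/H\to G/K$, so that the conjugated pair $\bigl(\Psi_t^{-1}\circ\mathfrak{v}_t\circ(\Psi_t\otimes\Psi_t),\ \Psi_t^{-1}\circ\mathfrak{a}_t\circ\Psi_t\bigr)$ is again an equivariant deformation in the sense of Definition \ref{equi formal deform defn}. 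This naturality is exactly the invariance relation $\phi_n^H\circ\psi_g=\psi_g\circ\phi_n^K$, so it holds; but I would record it rather than leave it implicit, since it is what makes the ``killing of coboundaries'' reduction stay inside the category of equivariant deformations.
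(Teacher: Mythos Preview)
Your proposal is correct and matches the paper's approach: the paper gives no explicit proof for this corollary (nor for its non-equivariant counterpart), simply recording it as an immediate consequence of the preceding theorem that a non-trivial equivariant deformation is equivalent to one whose infinitesimal is not a coboundary. Your contraposition argument is exactly the intended reasoning, and your additional check that the invariant $1$-cochain yields a natural family of formal automorphisms $\Psi_t^H$ is a worthwhile clarification that the paper leaves implicit.
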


 
\def\theequation{\arabic{section}. \arabic{equation}}
\setcounter{equation} {0}

\begin{center}
 {\bf ACKNOWLEDGEMENT}
 \end{center}
The first author is supported by the NSF of China (No. 12161013)  and Guizhou Provincial Science and Technology Foundation (No.ZK [2023]025). The second/corresponding author is supported by the Science and Engineering Research Board (SERB), Department of Science and Technology (DST), Govt. of India. (Grant Number- CRG/2022/005332). The authors would like to thank esteemed referees for carefully reading the manuscript and giving such constructive suggestions and comments which substantially helped improving the quality of the paper.

\renewcommand{\refname}{REFERENCES}

\end{document}